
%
\documentclass[10pt,a4paper]{amsart}
%
%
\usepackage{amsmath}
\usepackage{amsfonts}
\usepackage{amssymb}
\usepackage{graphicx}
\usepackage{graphicx}
\usepackage{caption}
\usepackage{subcaption}
\usepackage{tikz}
\usepackage{mathdots}

\usetikzlibrary{arrows,decorations.pathmorphing,decorations.pathreplacing}

\tikzset{vertex/.style={circle,fill=black,inner sep=1pt,outer sep=2pt},
         yvertex/.style={font=\small,minimum size=6pt},
         xvertex/.style={font=\small,minimum size=8pt},
         mvertex/.style={rectangle,draw=black,thick,inner sep=2pt,outer sep=2pt},
         tvertex/.style={inner sep=1pt,font=\scriptsize},
         unvertex/.style={circle,fill=white,draw=white,inner sep=1pt},
         fill1/.style={fill=black!20,draw=black!20},
         fill2/.style={fill=black!40,draw=black!40},
         fill12/.style={fill=black!60,draw=black!60},
         >=stealth',
         leadsto/.style={-angle 90,decorate,decoration=snake,very thick},
         cut/.style={decorate,decoration=saw,very thick}}
\newcommand{\replacevertex}[3][fill=white,draw=white]
 {
  \node at #2 [#1,circle,inner sep=1pt] {};
  \node #2 at #2 #3;
 }

%
\newtheorem*{theorem*}{Theorem}
\newtheorem{theorem}{Theorem}
\theoremstyle{plain}
\newtheorem{corollary}[theorem]{Corollary}

\newtheorem{lemma}[theorem]{Lemma}
\newtheorem{proposition}[theorem]{Proposition}

\DeclareMathOperator{\add}{add}

\newcommand{\End}{\operatorname{End}\nolimits}
\newcommand{\Ext}{\operatorname{Ext}\nolimits}
\newcommand{\Hom}{\operatorname{Hom}\nolimits}

\renewcommand{\mod}{\operatorname{mod}\nolimits}

\newcommand{\op}{{\operatorname{op}\nolimits}}

\begin{document}
\title[Special biserial cluster-tilted algebras]{SPECIAL BISERIAL CLUSTER-TILTED ALGEBRAS}
\author{Fedra Babaei}
\author{Yvonne Grimeland}

\address{Institutt for matematiske fag\\ NTNU\\ 7491 Trondheim\\ Norway}
\email{Fedra.Babaei@math.ntnu.no}
\email{Yvonne.Grimeland@math.ntnu.no}

\begin{abstract}
We give a complete description of all special biserial cluster-tilted algebras over a finite dimensional hereditary algebra $H$ over an algebraically closed field $K$.
\end{abstract}

\maketitle

\section*{Introduction} 
The class of cluster-tilted algebras was introduced by Buan, Marsh and Reiten in ~\cite{BMR}. They showed that for any cluster-tilted algebra $\Gamma$ arising from a cluster category $\mathcal C$, the Auslander-Reiten sequences in $\mod \Gamma$ are inherited from the Auslander-Reiten triangles in $\mathcal C$.(Auslander-Reiten is hereafter abbreviated as AR.)

Special biserial algebras were first defined in 1983 by Skowro{\'n}ski and Waschb{\"u}sch in ~\cite{WaschSkow}, where their main result gives a bound on the number of non-isomorphic non-projective indecomposable summands in the middle term of any AR-sequence in the module category of a representation-finite special biserial algebra. In ~\cite{WaldWasch} this bound was shown to hold for all special biserial algebras. They also showed that special biserial algebras of infinite representation type are tame and they found a combinatorial description of the AR-sequences of special biserial algebras.

The idea in this paper is to compare the class of cluster-tilted algebras and the class of special biserial algebras, using the knowledge referred to in the previous paragraphs. Our main result is the following
\begin{theorem*}
Let $H$ be a finite dimensional hereditary algebra over an algebraically closed field $K$ and $T$ be a basic cluster-tilting object in the cluster category $\mathcal{C}_{H}$. Then the cluster-tilted algebra $\Gamma=\End_{\mathcal{C}_H}(T)^{\op}$ is a special biserial algebra if and only if 
\begin{itemize}
\item[a)] $H$ is of type $A$, or
\item[b)] $H$ is of type $\widetilde{A}$, or
\item[c)] $H$ is of type $D$, and any indecomposable summand in $T$ is either an $\alpha$- or a $\gamma$-object. 
\end{itemize}
\end{theorem*}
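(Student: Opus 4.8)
The plan is to reduce, in each case, to the explicit description of the bound quiver $(Q_\Gamma,I)$ of the cluster-tilted algebra $\Gamma$ — the quiver coming from \cite{BMR} together with the known combinatorial models of the mutation classes of types $A$, $\widetilde A$ and $D$ — and then to verify the two defining conditions of a special biserial algebra: that at most two arrows start at, and at most two arrows end at, each vertex of $Q_\Gamma$; and that for every arrow at most one arrow precedes it in a path of length two not lying in $I$, and at most one arrow follows it in such a path. For type $D$ I would additionally use the classification of the indecomposable objects of $\mathcal C_H$ into $\alpha$-, $\beta$- and $\gamma$-objects established earlier.

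\emph{Sufficiency.} For (a) and (b) the point is that every cluster-tilted algebra of type $A$ or $\widetilde A$ is gentle: in either case every vertex of $Q_\Gamma$ has valency at most four, the arrows at a vertex split into at most two ``enter--leave'' pairs, and all relations are the length-two monomial relations forcing exactly one such pair to compose to a nonzero path; since a gentle algebra is special biserial, this settles these two cases. (Equivalently, these are precisely the cluster-tilted algebras attached to triangulated unpunctured surfaces, namely a disk and an annulus.) Case (c) is the substantive one. Here I would run through the possible local configurations of $(Q_\Gamma,I)$ around the image of the trivalent node of $D_n$ (equivalently, around the puncture in the surface model) and use the hypothesis that $T$ has no $\beta$-summand to exclude the configurations in which three arrows meet on one side of a vertex, or a vertex carries two entering and two leaving arrows whose length-two compositions do not fall into at most two nonzero paths. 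In each of the surviving configurations the special biserial conditions then hold by inspection.

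\emph{Necessity.} I would argue the contrapositive. If $H$ is of wild representation type then so is $\Gamma$: it has a tilted algebra of type $H$ as a quotient by a nilpotent ideal, and a tilted algebra of a wild hereditary algebra is again wild, whereas by \cite{WaldWasch} a special biserial algebra is never wild. Hence $\Gamma$ is not special biserial, and it remains to deal with the tame or representation-finite cases not covered by (a)--(c): types $\widetilde D$, $E$ and $\widetilde E$, and type $D$ when some summand of $T$ is a $\beta$-object. All of $\widetilde D$, $E$ and $\widetilde E$ are of finite mutation type, so there are only finitely many quivers $Q_\Gamma$ to examine, and for each of them one exhibits a vertex violating a special biserial condition — a vertex with three entering or three leaving arrows, or one whose entering and leaving arrows cannot be grouped into at most two length-two paths avoiding $I$; the vertex coming from the trivalent node of the corresponding (extended) Dynkin diagram is the natural place to look. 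For type $D$ with a $\beta$-summand the obstruction is exactly the local configuration ruled out in the sufficiency argument.

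The step I expect to be the main obstacle is case (c) together with its mirror in the necessity proof: pinning down precisely why a $\beta$-object among the summands of $T$ always forces a forbidden local configuration in $(Q_\Gamma,I)$, while an arbitrary combination of $\alpha$- and $\gamma$-objects never does. This requires a careful configuration-by-configuration analysis of the arrows and relations of a cluster-tilted algebra of type $D$ near the two fork vertices of $D_n$, and it is there that the auxiliary results of the earlier sections will be doing most of the work.
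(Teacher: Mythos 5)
Your outline is workable for types $A$, $\widetilde A$, the wild case, and (in principle, by a long but finite check) the quivers of types $E_6$, $E_7$, $E_8$, but it has two genuine gaps. First, the family $\widetilde D_n$ cannot be disposed of by the finite-mutation-type argument you give: although each individual $\widetilde D_n$ has a finite mutation class, $n$ is unbounded, so there are infinitely many quivers $Q_\Gamma$ in total and no finite inspection is available; you would need a structural description of all these mutation classes, which you do not supply. The paper avoids this entirely by working on the other side of the equivalence $\mathcal C/\add(\tau T)\simeq \mod\Gamma$: it shows (Lemma~\ref{lemma1}) that if $\mathcal C$ contains two consecutive AR-triangles with three middle terms none of whose relevant terms lies in $\add(\tau T)$, then $\beta(\Gamma)>2$, which by Wald--Waschb\"usch already rules out special biserial; for $\widetilde D$ and $\widetilde E$ this hypothesis is verified uniformly using the transjective component and the fact that only finitely many indecomposables fail to receive maps from a fixed transjective summand of $T$. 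This module-theoretic criterion also sidesteps a subtlety in your quiver-based necessity argument: being special biserial asks for \emph{some} presentation $KQ/I$ satisfying the conditions, so a violation of the composition condition for one particular ideal does not by itself disqualify the algebra, whereas $\beta(\Gamma)>2$ is an invariant of $\mod\Gamma$.

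Second, in case (c) the decisive step --- translating the hypothesis that every summand of $T$ is an $\alpha$- or $\gamma$-object into a statement about $(Q_\Gamma,I)$, and conversely --- is exactly the point you defer as ``the main obstacle,'' and no mechanism for it is proposed. The paper's bridge is the result of Bertani-{\O}kland, Oppermann and Wr{\aa}lsen that for a quiver of type $3$ in the Vatne classification the vertices of the central cycle correspond precisely to $\alpha$-summands of $T$ (the successor of $Y$ along the cycle being $\tau\phi Y$ or $\tau^2 Y$ according to whether the attached subquiver is empty or not), combined with an explicit enumeration of which collections of $\alpha$- and $\gamma$-objects actually assemble into tilting objects. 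The sufficiency direction is then closed not by inspecting relations but by checking that for such $T$ every AR-triangle with three middle terms has a middle term in $\add(\tau T)$, so that $\beta(\Gamma)\le 2$, and invoking the Skowro\'nski--Waschb\"usch converse for representation-finite algebras. Without an input of this kind your configuration-by-configuration analysis around the fork of $D_n$ has nothing to latch onto, since the position of a summand of $T$ in the AR-quiver of $\mathcal C$ is not visible in the quiver $Q_\Gamma$ alone.
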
 
We refer the reader to figure ~\ref{figurD} in section ~\ref{seksjonD} for an explanation of $\alpha$- and $\gamma$-objects. The notion of $\alpha$-objects was introduced in ~\cite{BOW}.
  
Cluster-tilted algebras of type $A$ and $\widetilde A$ are known to be gentle by ~\cite{ABGP}, and are hence special biserial. Furthermore, there is a complete classification, up to derived equivalence, of all quivers in the mutation class of $E_6$, $E_7$ and $E_8$ in ~\cite{bhl}, see also ~\cite{Bordino}. It follows from this classification that any cluster-tilted algebra of cluster-tilted type $E_6$, $E_7$ or $E_8$ is not special biserial, however our results in section ~\ref{seksjonE} are obtained using a different method.

The paper has $7$ sections, including the introduction. Section two contains background and necessary notation. In section three we introduce results about special biserial and cluster-tilted algebras that have been crucial for our work, and lastly, it ends with a very useful lemma. In section four, five and six we consider cluster-tilted algebras of cluster-tilted type $D$, $E$ and Euclidean type, respectively. Lastly, section seven contains a short summary.
\section{Background and Notation}
For a finite dimensional hereditary algebra $H$ over an algebraically closed field $K$, \emph{the cluster category} $\mathcal C$ of $H$ is defined to be the category $\mathcal{C}=D^b(H)/\tau^{-1}\left[1\right]$, where $\tau^{-1}$ is the inverse of the AR-translate of $D^b(H)$. 

An object $T$ in $\mathcal C$ is called a \emph{tilting object} if $\Ext^1_{\mathcal C}(T,T)=0$ and $T$ is maximal with respect to this property. A tilting object is called \emph{basic} if all of its direct summands are non-isomorphic. All tilting objects considered in this paper will be basic tilting objects.

By \cite[theorem 3.3]{BMRRT} any basic tilting object $T$ in $\mathcal C$ is induced by a basic tilting module over a hereditary algebra $H'$, where $H'$ is derived equivalent to $H$, and $T$ has $n$ indecomposable direct summands, where $n$ is the number of non-isomorphic simple $H$-modules. Also any tilting module over $H$ induces a basic tilting object in $\mathcal C$. Any almost complete basic tilting object $\bar{T}$ in $\mathcal{C}$ has exactly two complements, and if $M$ is one complement for $\bar{T}$, then there are exchange triangles in $\mathcal C$ (see ~\cite{BMRRT}):$$M^{\ast}\rightarrow B\rightarrow M \rightarrow M^{\ast}[1] \ \ \ \text{and} \ \ \ M\rightarrow B'\rightarrow M^{\ast} \rightarrow M[1],$$where $B\rightarrow M$ (or $M\rightarrow B'$) is a minimal right (left, respectively) $\add\bar{T}$-approximation of $M$.

For any tilting object $T$ in $\mathcal C$ there is a \emph{cluster-tilted algebra} $\Gamma=\End_{\mathcal C}(T)^{\op}$. If $T$ is a tilting object in the cluster category of a path algebra of type $A, D, E, \widetilde{A}, \widetilde{D}$ or $\widetilde{E}$, then we say that $\Gamma$ is a cluster-tilted algebra \emph{of cluster-tilted type} $A, D, E, \widetilde{A}, \widetilde{D}$ or $\widetilde{E}$, respectively.

By ~\cite{BMR2} it is known that if $\Gamma$ is a cluster-tilted algebra of finite representation type and $\Gamma=KQ/I$, then the ideal $I$ is generated by the following minimal relations; for each arrow $i\stackrel{a}{\rightarrow}j$ which is part of exactly one cycle, the shortest path $j\rightarrow i$ is a generating zero-relation in $I$. Further, if $i\stackrel{a}{\rightarrow}j$ is part of two different cycles in $Q$, then there are two shortest paths, $\rho_1$ and $\rho_2$, from $j\rightarrow i$ and $\rho_1+x\rho_2$, for some $x\neq0$ in $K$, is a generating commutativity-relation of $I$. For an arrow $i \stackrel{a}{\rightarrow}j$ a path from $j\rightarrow i$ is called \emph{shortest} if it contains no proper subpath that is a cycle and if the full subquiver generated by the induced oriented cycle does not contain any more arrows.

The notion of quiver mutation was defined in ~\cite{FZ1}. Let $Q$ be a quiver with no loops and no cycles of length 2, then mutation on a vertex $i$ in $Q$ will produce a new quiver $Q^*$ satisfying the following conditions:
\begin{itemize}
\item The vertex $i$ is replaced by a vertex $i^*$ and all other vertices are kept,
\item any arrow incident with $i$ is reversed and all other arrows are kept,
\item if there are $r>0$ arrows $j\rightarrow i$ and $s>0$ arrows $i\rightarrow k$, add $rs$ arrows $j\rightarrow k$ then remove a maximal number of $2$-cycles.
\end{itemize}
Two quivers $Q$ and $Q'$ are said to be mutation equivalent if $Q'$ can be obtained from $Q$ by a finite number of mutations.

\section{AR-sequences and Special Biserial Algebras}\label{seksjonARogSB}
In this section we first give the definition of a special biserial algebra. We then state a result regarding the AR-sequences of special biserial algebras ~\cite{WaldWasch}. This result together with an equivalence from ~\cite{BMR} gives Lemma ~\ref{lemma1}, which will be used in sections ~\ref{seksjonD}, ~\ref{seksjonE} and ~\ref{seksjonTilde}.

A finite dimensional algebra $A$ is called \emph{special biserial},~\cite{WaldWasch}, if $A$ is isomorphic to $KQ/I$ for some quiver $Q$ and some admissible ideal $I$ such that both a) and b) are satisfied:
\begin{itemize}
\item[a)] for each vertex $x$ of $Q$ there are at most two arrows starting in $x$ and at most two arrows ending in $x$
\item[b)] for any arrow $a$ in $Q$ there is at most one arrow $b$ such that $ab\notin I$ and at most one arrow $c$ such that $bc\notin I$ 
\end{itemize}

Let $A$ be a finite dimensional algebra over $K$. For any non-projective $A$-module $Z$ there is an AR-sequence
$$0\rightarrow X \rightarrow \bigoplus_{i=1}^{n} Y_i\rightarrow  Z\rightarrow 0 .$$ In ~\cite{AR} they define $\alpha(Z):=n$ to be the number of indecomposable modules in the middle term of the AR-sequence ending in $Z$ and $\beta(Z)$ to be the number of non-projective indecomposable middle terms in the same sequence. For the algebra $A$ they define $\alpha(A) $
to be the maximal number of indecomposable summands occurring in any AR-sequence in $\mod A$ and $\beta (A)$ to be the maximal number of indecomposable non-projective middle terms occurring in any AR-sequence in $\mod A$. From ~\cite{WaldWasch} we have the following result:
\begin{theorem}\label{WW} \cite{WaldWasch}
We have $\beta (A)\leq 2$ for any special biserial algebra $A$.
\end{theorem}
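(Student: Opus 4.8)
The plan is to reduce the statement to the combinatorics of string and band modules, in the spirit of Butler and Ringel, and then count.

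First I would reduce to a string algebra. Writing $A=KQ/I$, let $P_1,\dots,P_s$ be the indecomposable \emph{biserial} projective--injective $A$-modules and set $A'=A/\mathrm{soc}(P_1\oplus\dots\oplus P_s)$; then $A'$ is a string algebra (killing these socles turns the binomial relations of $I$ into zero-relations), and by the structure theory of special biserial algebras every indecomposable $A$-module is either one of the $P_k$ or an indecomposable $A'$-module, the latter being exactly the string modules $M(w)$ and the band modules $M(w,\lambda,n)$. Since every $P_k$ is projective, a non-projective indecomposable $A$-module $Z$ is a string module or a band module, and for the string modules there are two possibilities: $Z$ is either also non-projective over $A'$, or $Z\cong P_k/\mathrm{soc}\,P_k$ for some $k$ (these are precisely the indecomposables that become projective over $A'$). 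So it suffices to treat these three kinds of $Z$.

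If $Z=M(w,\lambda,n)$ is a band module, it lies in a homogeneous tube, $\tau Z\cong Z$, and the AR-sequence ending at $Z$ is $0\to M(w,\lambda,n)\to M(w,\lambda,n-1)\oplus M(w,\lambda,n+1)\to M(w,\lambda,n)\to 0$ (drop the first summand when $n=1$); no middle term is projective, so $\beta(Z)\le 2$. If $Z\cong P_k/\mathrm{soc}\,P_k$, the AR-sequence ending at $Z$ is the standard one for a projective--injective, $0\to \mathrm{rad}\,P_k\to P_k\oplus(\mathrm{rad}\,P_k/\mathrm{soc}\,P_k)\to P_k/\mathrm{soc}\,P_k\to 0$, and $\mathrm{rad}\,P_k/\mathrm{soc}\,P_k$ splits as a direct sum of at most two uniserial modules (the two arms of $P_k$ become disconnected once the common socle is removed); since $P_k$ is projective, $\beta(Z)\le 2$. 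Finally, if $Z=M(w)$ is non-projective over both $A$ and $A'$, I would invoke the Butler--Ringel description of the AR-sequence ending at $M(w)$ --- it is assembled from the canonical ``add a hook / delete a cohook'' operations at the two ends of the string $w$ --- and check that passing from $\mod A'$ to $\mod A$ alters it only by adjoining projective--injective summands $P_k$, so that its non-projective middle terms remain just the (at most two) string modules coming from the two ends of $w$; thus $\beta(Z)\le 2$.

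Taking the maximum over all non-projective indecomposable $Z$ yields $\beta(A)\le 2$. The step I expect to be the genuine obstacle is the last one: pinning down exactly how the biserial projective--injectives intervene in the AR-sequences ending at ordinary string modules, and verifying that they only ever adjoin a projective summand and never displace one of the two string summands --- this is the technical core of the argument in \cite{WaldWasch}. Everything else is the routine strings-and-bands bookkeeping recalled above.
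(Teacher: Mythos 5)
The paper offers no proof of this statement: it is quoted verbatim from the reference [Wald--Waschb\"usch], so there is nothing internal to compare your argument against. Your sketch reconstructs essentially the argument of that source (and of Butler--Ringel): reduce to the string algebra $A'=A/\operatorname{soc}(P_1\oplus\cdots\oplus P_s)$, dispose of band modules via the homogeneous tubes, of $P_k/\operatorname{soc}P_k$ via the standard sequence through a projective--injective, and of the remaining string modules via the hook/cohook description, which produces at most two string summands in the middle term. The one step you flag as the ``genuine obstacle'' --- that passing from $\mod A'$ to $\mod A$ only ever adjoins projective--injective summands --- is in fact a standard lemma rather than a difficulty: an indecomposable projective--injective $P_k$ can occur as a middle term of an almost split sequence only if there is an irreducible map out of $P_k$, and since $P_k$ is injective every such map factors as a component of $P_k\to P_k/\operatorname{soc}P_k$ with $P_k/\operatorname{soc}P_k$ indecomposable; hence $P_k$ appears only in the sequence ending at $P_k/\operatorname{soc}P_k$, and for every other non-projective string module the almost split sequence over $A'$ is already the almost split sequence over $A$. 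With that observation supplied, your outline is a correct proof modulo the cited structure theory.
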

\noindent For algebras of finite representation type there is the following stronger result.
\begin{theorem}\label{WSkow}\cite{WaschSkow}
Let $A$ be an algebra of finite representation type. Then $\beta(A)\leq 2$ if and only if $A$ special biserial. 
\end{theorem}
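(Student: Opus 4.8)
The implication ``$A$ special biserial $\Rightarrow \beta(A)\le 2$'' is a special case of Theorem~\ref{WW} and needs no hypothesis on representation type, so the content of the statement lies in the reverse implication, and this is where finiteness of representation type is essential. I would prove it by contraposition: assuming $A=KQ/I$ --- which we may take to be basic and connected --- is \emph{not} special biserial, I would construct an almost split sequence in $\mod A$ whose middle term has at least three non-projective indecomposable summands, so that $\beta(A)\ge 3$.

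The first step is to turn the failure of special biseriality into a concrete obstruction in $(Q,I)$. If condition~(a) fails, some vertex $x$ carries three arrows starting at $x$ or three arrows ending at $x$; if (a) holds but (b) fails, there is an arrow $a\colon z\to x$ and two distinct arrows $b,c$ starting at $x$ with $ab\notin I$ and $ac\notin I$ (or the left-handed version with arrows into a vertex). Thus there are only finitely many shapes of obstructing local configuration to treat.

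The second step is to attach to each such configuration an explicit non-projective module $Z$ and analyse the almost split sequence $0\to\tau Z\to E\to Z\to 0$. For a vertex with three arrows out one looks at $\operatorname{rad}P_x$, whose top contains $S_{y_1}\oplus S_{y_2}\oplus S_{y_3}$; for a $(b)$-violation one looks at the short string module supported on the path through $a$ and then $b$ or $c$. In each case one uses the standard description of irreducible maps at projectives and injectives (the inclusion $\operatorname{rad}P\hookrightarrow P$ and the projection $I\twoheadrightarrow I/\operatorname{soc}I$ are irreducible) together with the AR-translate to propagate the configuration outward through the Auslander--Reiten quiver until one reaches an almost split sequence with at least three non-projective middle terms. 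Because $A$ is representation-finite this propagation is finite and terminates, which is exactly where the hypothesis is used; one must also keep track of $\alpha(Z)-\beta(Z)$, the number of projective middle summands, so that the three summands one produces really are non-projective.

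The main obstacle is that the step just sketched reads information off the Auslander--Reiten quiver as though its mesh structure faithfully encoded $\mod A$, i.e.\ as though $A$ were standard, whereas a representation-finite algebra need not be standard. One clean way to make the argument rigorous is to reduce, along a Galois covering, to the case of a simply connected --- hence standard --- algebra, checking that a covering $\widetilde A\to A$ neither creates nor destroys special biseriality (the local conditions (a) and (b) are visible in the quiver and both lift and descend through the covering) and that it does not change the bound on $\beta$. Once one is in the standard situation the Auslander--Reiten quiver is purely combinatorial and the configuration-by-configuration analysis above goes through; the genuine labour is this covering reduction together with the bookkeeping of how the relations in $I$ enter the various sub-cases.
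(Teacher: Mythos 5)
You should first be aware that the paper offers no proof of this statement: Theorem~\ref{WSkow} is imported verbatim from Skowro\'nski--Waschb\"usch and used as a black box, so there is no in-paper argument for your sketch to be measured against. Your observation that the forward implication is a special case of Theorem~\ref{WW} and needs no finiteness hypothesis is correct. For the converse, your outline does point in the direction the literature actually takes (local obstruction in the presentation, mesh analysis in the AR-quiver, reduction to the standard case), but the two steps that carry all the weight are asserted rather than argued, so as it stands this is a plan, not a proof.

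The central gap is the ``propagation'' step: you claim that from an obstructing local configuration one pushes outward through the Auslander--Reiten quiver ``until one reaches an almost split sequence with at least three non-projective middle terms'', with representation-finiteness guaranteeing termination. Termination is not the issue; the issue is why the process must ever \emph{produce} such a sequence rather than, say, having the third summand always absorbed by projective-injective middle terms. That implication is the entire content of the hard direction, and no mechanism for it is given. A second, logical, gap is the contrapositive set-up: ``$A$ is not special biserial'' means that \emph{no} admissible presentation $KQ/I$ satisfies (a) and (b). Condition (a) depends only on the Gabriel quiver, but (b) depends on the choice of $I$, so exhibiting a violation of (b) in one chosen presentation neither certifies that $A$ fails to be special biserial, nor can the statement ``some presentation violates (b) $\Rightarrow\beta(A)\geq 3$'' be what you prove, since a special biserial algebra admits badly chosen presentations; you would need an invariant reformulation (e.g.\ via the structure of the radicals of the indecomposable projectives) or a canonical choice of presentation. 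Finally, the covering reduction you invoke is exactly where the standard/non-standard dichotomy for representation-finite algebras lives, and the assertion that conditions (a) and (b) ``lift and descend'' along a Galois covering is the delicate point there, not a routine check --- this is where the hypothesis distinguishing special biserial from merely biserial algebras enters.
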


If $C$ is a triangulated category with AR-triangles, $X$ an indecomposable object in $C$ and $\chi$ the AR-triangle where $X$ is the third term, then we define the number $\alpha_{C}(X)$ to be the number of indecomposable objects in the second term of $\chi$. We define $\alpha(C)= \sup\{\alpha(X)\mid X \text{ an indecomposable object in } C \}$.

From ~\cite{BMR} we have the equivalence:
\begin{theorem}\label{equivalence}
Let $\mathcal C$ be the cluster category of a hereditary algebra $H$, let $T$ be a tilting object in $\mathcal C$ and let $\Gamma=\End_{\mathcal C}(T)^{\op}$, then $F=\Hom_{\mathcal C}(T,-)$
induces an equivalence $$\mathcal{C}/\add (\tau T)\rightarrow \mod \Gamma .$$ Furthermore, the almost split sequences in $\mod \Gamma\simeq\mathcal{C}/\add (\tau T)$ are induced by almost split triangles in $\mathcal C$.
\end{theorem}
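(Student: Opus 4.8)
Although this is quoted from \cite{BMR}, let me indicate how I would reprove it. The plan is to run the standard tilting-theoretic argument, using that $\mathcal C$ is a Hom-finite Krull--Schmidt $K$-linear triangulated category which is $2$-Calabi--Yau, so that its AR-translate satisfies $\tau=\tau_{\mathcal C}\cong[1]$ (hence $\add(\tau T)=\add(T[1])$) and AR-triangles exist, and that the tilting object $T=\bigoplus_{i=1}^{n}T_i$ is \emph{cluster-tilting}, i.e.\ $\add T=\{X\in\mathcal C:\Ext^1_{\mathcal C}(T,X)=0\}$. First I would record two formal reductions. Since $\Gamma=\End_{\mathcal C}(T)^{\op}$, the functor $F=\Hom_{\mathcal C}(T,-)$ restricts to an equivalence $\add T\xrightarrow{\ \sim\ }\operatorname{proj}\Gamma$. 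And every $X\in\mathcal C$ admits a triangle $T_1\xrightarrow{\,g\,}T_0\xrightarrow{\,f\,}X\to T_1[1]$ with $T_0,T_1\in\add T$: take $f$ to be a right $\add T$-approximation (such exist, $\add T$ being functorially finite), complete to a triangle $Y\to T_0\xrightarrow{\,f\,}X\to Y[1]$, and apply $\Hom_{\mathcal C}(T,-)$; since $\Hom(T,T_0)\twoheadrightarrow\Hom(T,X)$ and $\Ext^1_{\mathcal C}(T,T_0)=0$ one gets $\Ext^1_{\mathcal C}(T,Y)=0$, hence $Y\in\add T$.

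Next I would deduce density and the object-level kernel. Applying $F$ to the above triangle and using $\Ext^1_{\mathcal C}(T,T_1)=0$ gives an exact sequence $FT_1\xrightarrow{Fg}FT_0\xrightarrow{Ff}FX\to0$, so $FX$ is a cokernel of a morphism of projective $\Gamma$-modules; conversely, lifting a projective presentation of an arbitrary $M\in\mod\Gamma$ through $\add T\simeq\operatorname{proj}\Gamma$ and completing to a triangle shows $F$ is dense. To identify $\{X\in\mathcal C:FX=0\}$, take $X$ indecomposable with $FX=0$: then $Fg$ is an epimorphism of projective $\Gamma$-modules, hence split, hence $g$ is a split epimorphism in $\mathcal C$; writing $T_1\cong T_0\oplus T_1''$ accordingly, the triangle collapses to $T_1''\to0\to X\to T_1''[1]$, so $X\cong T_1''[1]$ with $T_1''$ an indecomposable summand of $T$. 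Since conversely $\Hom_{\mathcal C}(T,T[1])=\Ext^1_{\mathcal C}(T,T)=0$, this shows $FX=0$ exactly when $X\in\add(T[1])=\add(\tau T)$.

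The technical heart, and the step I expect to be the main obstacle, is showing that $F$ is full and that $F\psi=0$ forces $\psi$ to factor through $\add(\tau T)$; this is where the presentation triangles must be combined carefully with full faithfulness of $F$ on $\add T$ and with the computation above. For fullness I would lift a morphism $FX\to FY$ along projective presentations to a chain map of $\add T$-presentations, realise its components in $\mathcal C$, extend the resulting commutative square to a morphism of the two defining triangles to obtain $\tilde\psi\colon X\to Y$, and check $F\tilde\psi$ agrees with the given map using that $FT_0\twoheadrightarrow FX$. For the kernel-on-morphisms, if $F\psi=0$ for $\psi\colon X\to Y$, then $\psi f\pi=0$ for any split epimorphism $\pi\colon T^{m}\twoheadrightarrow T_0$ (each component of $\psi f\pi$ lies in the image of $F(\psi f)=0$), hence $\psi f=0$; applying $\Hom_{\mathcal C}(-,Y)$ to the rotated triangle $T_0\xrightarrow{\,f\,}X\xrightarrow{\,v\,}T_1[1]\to T_0[1]$ then factors $\psi$ through $v$, and $T_1[1]\in\add(\tau T)$. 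Since morphisms factoring through $\add(\tau T)$ are clearly killed by $F$, combining this with density and the object-level kernel shows that $F$ induces a fully faithful dense functor $\mathcal C/\add(\tau T)\to\mod\Gamma$, i.e.\ an equivalence.

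Finally, for the statement on almost split sequences I would transport AR-theory through this equivalence: given an AR-triangle $\tau Z\to E\to Z\to(\tau Z)[1]$ in $\mathcal C$ with $Z$ indecomposable and $Z\notin\add T\cup\add(\tau T)$, its image is an AR-sequence $0\to F(\tau Z)\to FE\to FZ\to0$ in $\mod\Gamma$ — right and left almost splitness descend to the quotient because a morphism to (resp.\ from) $Z$ that is not a split epimorphism (resp.\ monomorphism) modulo $\add(\tau T)$ lifts to one in $\mathcal C$, while $F(\tau Z)$ and $FZ$ stay indecomposable and non-projective — and conversely every AR-sequence in $\mod\Gamma$ ends in such an $FZ$ and so arises this way; the only point to watch is that summands of $E$ lying in $\add(\tau T)$ are discarded correctly, which is automatic since $F$ vanishes on them.
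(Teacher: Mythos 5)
The paper does not prove this statement at all: it is quoted verbatim from \cite{BMR} as imported background, so there is no in-paper proof to compare against. Your sketch is essentially correct, but it follows a genuinely different (and more general) route than the source it would replace. Buan--Marsh--Reiten prove the equivalence by working inside $D^b(H)$: they take $T$ to be an actual tilting module over a hereditary algebra, use the fundamental domain $\ind H\cup\{P[1]\}$, the torsion pair and the Brenner--Butler tilting theorem to compare $\mod\End_H(T)$ with $\mod H$, and then compute $\Hom_{\mathcal C}(X,Y)=\Hom_{D}(X,Y)\oplus\Hom_D(X,\tau^{-1}Y[1])$ by hand. Your argument instead uses only that $\mathcal C$ is Hom-finite, $2$-Calabi--Yau with $\tau\cong[1]$, and that $\add T=\{X:\Ext^1_{\mathcal C}(T,X)=0\}$, running the $\add T$-presentation-triangle machine (density, fullness, kernel equal to the ideal of maps factoring through $\add(T[1])$); this is the Keller--Reiten/K\"onig--Zhu approach, which buys generality and transparency at the cost of invoking the cluster-tilting maximality property from \cite{BMRRT} as a black box. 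All the individual steps you indicate are sound. The one place where your sketch is thinner than it should be is the exactness of $0\to F(\tau Z)\to FE\to FZ\to 0$: surjectivity of $FE\to FZ$ needs $Z\notin\add T$ (so that every map $T\to Z$ factors through the right almost split map), and injectivity of $F(\tau Z)\to FE$ needs $Z\notin\add(\tau T)$ (so that the composite $T[1]\to Z\xrightarrow{w}\tau Z[1]$ with the connecting morphism vanishes); ``almost splitness descends'' does not by itself give exactness at the ends, so you should record these two vanishing arguments explicitly.
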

\noindent Note that $F$ sends summands of $T$ to projectives in $\mod \Gamma$.

By combining Theorem ~\ref{WW} and ~\ref{equivalence} above we have the lemma:
\begin{lemma}\label{lemma1}
Let $T$ be a tilting object in the cluster category $\mathcal C$ of a hereditary algebra $H$. If there is an indecomposable object $X$ in $\mathcal C$ with the properties
\begin{itemize}
\item[a)] $\alpha_{\mathcal C}(X)=\alpha_{\mathcal C}(\tau X)=3$ and 
\item[b)] $\Hom_{\mathcal C}(T,Y)\neq 0$ for $Y$ in $\{ X, \tau X ,\tau^2 X\}\bigcup \{Z\mid Z$ a summand in the AR-triangle ending in $\ \tau X$ or $X\}$,
\end{itemize}
then $\beta(\Gamma)>2$ and $\Gamma=\End_{\mathcal C}(T)^{\op}$ is not special biserial. 
\end{lemma}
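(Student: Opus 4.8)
The plan is to translate the hypotheses on $\mathcal{C}$ into the statement that $\Gamma$ admits an AR-sequence with more than two non-projective middle terms, and then invoke Theorem~\ref{WW} in its contrapositive form. First I would consider the AR-triangle in $\mathcal{C}$ ending in $X$, say
\[
\tau X \to \bigoplus_{i=1}^{3} E_i \to X \to (\tau X)[1],
\]
which has three indecomposable middle terms by hypothesis a). I would apply the functor $F = \Hom_{\mathcal{C}}(T,-)$. By Theorem~\ref{equivalence}, $F$ kills precisely the summands of $\tau T$ and carries almost split triangles in $\mathcal{C}$ to almost split sequences in $\mod\Gamma$. So the image of this triangle is an AR-sequence in $\mod\Gamma$ provided $F(X) \neq 0$ (i.e.\ $X \notin \add(\tau T)$), which follows from hypothesis b) with $Y = X$ together with the fact that $\Hom_{\mathcal{C}}(T, \tau T) = \Ext^1_{\mathcal{C}}(T,T)[{-}1] \cong \Ext^1_{\mathcal{C}}(T,T) = 0$ (here one uses that $\mathcal{C}$ is $2$-Calabi--Yau / that $\tau = [1]$ on $\mathcal{C}$, so an object of $\add(\tau T)$ has no maps from $T$).

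Next I would argue that the resulting AR-sequence ending in $F(X)$ still has three nonzero middle terms $F(E_1), F(E_2), F(E_3)$: again by hypothesis b) each $E_i$ satisfies $\Hom_{\mathcal{C}}(T, E_i) \neq 0$, so none of the $E_i$ lies in $\add(\tau T)$ and none is killed by $F$; since $F$ is additive and an equivalence on the quotient, the three summands remain distinct and nonzero. Thus $\alpha(F(X)) = 3$ in $\mod\Gamma$. The same reasoning applied to the AR-triangle ending in $\tau X$ (using hypothesis a) for $\tau X$ and the relevant instances of b), namely $\tau X, \tau^2 X$ and the summands of that triangle) shows that the AR-sequence ending in $F(\tau X)$ also has three nonzero middle terms. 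The point of also controlling $\tau X$ is the following: we need an AR-sequence with three \emph{non-projective} middle terms, and Theorem~\ref{WW} bounds only $\beta(\Gamma)$, not $\alpha(\Gamma)$.

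The main obstacle, therefore, is to rule out the possibility that one of $F(E_1), F(E_2), F(E_3)$ is projective. Here is where the information about $\tau X$ is used. Recall $F$ sends summands of $T$ to projectives, and in fact the indecomposable projective $\Gamma$-modules are exactly the $F(T_j)$. If $F(E_i)$ were projective, then $E_i$ would be (isomorphic in the quotient to) a summand of $T$; but a summand $T_j$ of $T$ is, up to the action of $\tau$ on indices, determined, and $\tau(T_j)$ is then a summand of $\tau T$, hence killed by $F$ — this would force the AR-sequence ending in $F(\tau X)$ to have strictly fewer than three middle terms, since $E_i$ (a middle term of the triangle ending in $X$) appears via $\tau$-translation as data attached to the triangle ending in $\tau X$. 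More precisely, applying $\tau^{-1}$ to the AR-triangle ending in $\tau X$ gives back the one ending in $X$, so its middle terms are the $\tau^{-1} E_i$ shifted appropriately; if some $E_i \in \add T$ then the corresponding middle term of the $\tau X$-triangle lies in $\add(\tau T)$ and is killed, contradicting that the $\tau X$-sequence has three nonzero middle terms. Hence all three $F(E_i)$ are non-projective, $\beta(\Gamma) \geq 3 > 2$, and by Theorem~\ref{WW} the algebra $\Gamma$ cannot be special biserial.
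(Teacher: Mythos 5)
Your proposal is correct and follows essentially the same route as the paper: use hypothesis b) to show that $X$, $\tau X$ and the middle terms of both AR-triangles are neither in $\add(\tau T)$ (so nothing is killed by $F=\Hom_{\mathcal C}(T,-)$) nor in $\add T$ (so the images of the middle terms are non-projective, which is exactly where the triangle ending in $\tau X$ is needed), then conclude $\beta(\Gamma)\geq 3$ and invoke Theorem~\ref{WW}. The only cosmetic difference is that you justify ``$\Hom_{\mathcal C}(T,Z)\neq 0$ implies $Z\notin\add(\tau T)$'' via $\tau=[1]$ and $\Ext^1_{\mathcal C}(T,T)=0$, while the paper phrases the same fact through Serre duality as $\Ext^1_{\mathcal C}(\tau^{-1}Z,T)\neq 0$.
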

\begin{proof}
We claim that in $\mod \Gamma$ there is an AR-sequence
$$
\begin{tikzpicture}
\node (0) at (0,3) [xvertex] {$0$};
\node (1) at (2,3) [xvertex] {$\Hom_{\mathcal C}(T,\tau X)$};
\node (2) at (5,2.4) [xvertex] {$\Hom_{\mathcal C}(T,Y_3)$};
\node (3) at (5,3) [xvertex] {$\Hom_{\mathcal C}(T,Y_2)$};
\node (4) at (5,3.6) [xvertex] {$\Hom_{\mathcal C}(T,Y_1)$};
\node (5) at (8,3) [xvertex] {$\Hom_{\mathcal C}(T,X)$};
\node (6) at (10,3) [xvertex] {$0$};
\draw [->] (0)--(1);
\draw [->] (1)--(2);
\draw [->] (1)--(3);
\draw [->] (1)--(4);
\draw [->] (2)--(5);
\draw [->] (3)--(5);
\draw [->] (4)--(5);
\draw [->] (5)--(6);
\end{tikzpicture} 
$$
where $\Hom_{\mathcal C}(T,Y_i)$ is non-projective for $i=1,2,3$.
By assumption a) we have the following two AR-triangles in $\mathcal C$; 
$$
\begin{tikzpicture}
\node (0) at (0,3) [xvertex] {$\tau^2 X$};
\node (1) at (2,2.5) [xvertex] {$\tau Y_3$};
\node (2) at (2,3) [xvertex] {$\tau Y_2$};
\node (3) at (2,3.5) [xvertex] {$\tau Y_1$};
\node (4) at (4,3) [xvertex] {$\tau X$};
\node (5) at (6,2.5) [xvertex] {$Y_3$};
\node (6) at (6,3) [xvertex] {$Y_2$};
\node (7) at (6,3.5) [xvertex] {$Y_1$};
\node (8) at (8,3) [xvertex] {$X$};
\draw [->] (0)--(1);
\draw [->] (0)--(2);
\draw [->] (0)--(3);
\draw [->] (1)--(4);
\draw [->] (2)--(4);
\draw [->] (3)--(4);
\draw [->] (4)--(5);
\draw [->] (4)--(6);
\draw [->] (4)--(7);
\draw [->] (5)--(8);
\draw [->] (6)--(8);
\draw [->] (7)--(8);
\end{tikzpicture} .$$
Let $\mathcal{S}=\{\tau^2 X, \tau X, X\} \bigcup \{ \tau Y_i,Y_i \}_{i=1}^3$. By assumption b) we have for all $Z$ in $\mathcal S$ that $\Ext^1_{\mathcal C}(\tau^{-1}Z,T)\neq 0$, so $\tau^{-1} Z$ is not a summand in $T$. 

Applying the equivalence $F$ from Theorem ~\ref{equivalence} it follows that $\Hom_{\mathcal C}(T,Y_i)$ is not projective in $\mod\Gamma$ since $Y_i$ is not a summand of $T$. Furthermore, since $\tau^{-1}Z$ is not a summand of $T$ for any $Z$ in $\mathcal S$, it follows that $X,Y_1,Y_2,Y_3$ and $\tau X$ are not summands of $\tau T$. Hence $\beta_{\mod \Gamma}(\Hom_{\mathcal{C}}(T,X))=3$, so $\Gamma$ is not special biserial by Theorem ~\ref{WW}.
\end{proof}
This lemma can be generalized further by considering triangles with more than three middle terms in assumption a) and then by adjusting assumption b) accordingly.

\section{Dynkin type $D$}\label{seksjonD}
We are interested in finding all the cluster-tilted algebras of type $D_n$ that are also special biserial. 

Throughout this section we assume that $\mathcal C$ is the cluster category of Dynkin type $D_n$. Let $H=KQ$, where $Q$ is some orientation of $D_n$. Then the AR-quiver of $\mathcal C$ is shown in figure ~\ref{figurD}, where the dashed lines are identified with each other.
\begin{figure}[h!]
\[ \scalebox{.8}{ \begin{tikzpicture}[scale=.5,yscale=-1]
 \foreach \x in {0,...,8}
  \foreach \y in {0,1,3,5,7}
   \node (\y-\x) at (\x*2,\y) [vertex] {};
 \foreach \x in {0,...,7}
  \foreach \y in {2,4,6}
   \node (\y-\x) at (\x*2+1,\y) [vertex] {};
 \foreach \xa/\xb in {0/1,1/2,2/3,3/4,4/5,5/6,6/7,7/8}
  \foreach \ya/\yb in {0/2,1/2,3/4,5/4,7/6}
   {
    \draw [->] (\ya-\xa) -- (\yb-\xa);
    \draw [->] (\yb-\xa) -- (\ya-\xb);
   }
 \foreach \xa/\xb in {0/1,1/2,2/3,3/4,4/5,5/6,6/7,7/8}
  \foreach \ya/\yb in {3/2,5/6}
   {
    \draw [thick,loosely dotted] (\ya-\xa) -- (\yb-\xa);
    \draw [thick,loosely dotted] (\yb-\xa) -- (\ya-\xb);
   }
 \draw [dashed] (0,-.5) -- (0,7.5); 
 \draw [dashed] (16,-.5) -- (16,7.5);
\end{tikzpicture} } \]
\caption{}
\label{figurD}
\end{figure} 

Indecomposable objects appearing in the top two rows of the AR-quiver of $\mathcal{C}_{D_n}$, see figure ~\ref{figurD}, are called $\alpha$-objects ~\cite{BOW} and we define $\gamma$-objects as all indecomposable objects in the bottom row of the AR-quiver of $\mathcal{C}_{D_n}$.

For $n\geq 5$ we define an equivalence $\phi$ on $\mathcal C$ as follows, if $X$ is an indecomposable $\alpha$-object then $\phi X$ is the unique other $\alpha$-object occurring as a summand in the middle term of an AR-triangle together with $X$. For all other indecomposable objects $\phi X =X$.

There is an explicit description of the mutation class of $D_n$ quivers in ~\cite{Vatne}, where the mutation class is divided into four types of quivers. In ~\cite{BOW} this was combined into three types. We will use the latter description, and we state the details needed to discuss which quivers in the mutation class correspond to special biserial cluster-tilted algebras.

First we give a description of the mutation class $\mathcal{M}^A_k$ of $A_k$, see~\cite{BV}; It is the set of all connected quivers satisfying the following four requirements;
\begin{itemize}
\item there are $k$ vertices and each vertex has valency at most four (i.e. it has at most four neighbors),
\item all non-trivial cycles are oriented and of length 3,(such cycles are called 3-cycles),
\item if a vertex has valency four, then two of the arrows belongs to one 3-cycle and the other two belong to another 3-cycle and
\item if a vertex has valency three, then two of the arrows belongs to a 3-cycle and the third arrow does not belong to any 3-cycle.
\end{itemize}
For a quiver $Q$ in $\mathcal{M}^A_k$, a vertex is called a connecting vertex if it has valency at most 2, and if it has valency 2 then it is part of a 3-cycle. We denote by $\mathcal{M}^A$ the union of all $\mathcal{M}^A_k$ for all $k$.

We now describe the three types of quivers occurring in the mutation class of $D_n$. Any cluster-tilted algebra of type $D_n$ has a quiver that is one of the three types in figure \ref{treTypar},
\begin{figure}[h]
        \begin{subfigure}[b]{0.45\textwidth}
                \centering
              \begin{tikzpicture}[baseline=1cm]
										\node (s1) at (1,1)[tvertex] {$\star_2$};
										\node (2) at (2,.5)[tvertex] {$v_1$};
										\node (s2) at (3,1)[tvertex] {$\star_1$};
										\node (3) at (2,1.5) [tvertex] {$v_2$};
										\draw [->] (s2)--(3);
										\draw [->] (3)--(s1);
										\draw [->] (s2)--(2);
										\draw [->] (2)--(s1);
										\draw [->] (s1)--(s2);
										\draw[rotate=0]  (0.6,1) ellipse (24pt and 15pt); 
										\draw[rotate=0] (3.4,1) ellipse (24pt and 15pt); 
										\node (q1) at (0.3,1) [xvertex] {$Q_2$};
										\node (q2) at (3.7,1) [xvertex] {$Q_1$};
							\end{tikzpicture}  
        \caption{Type 1}
        \label{Type 1}
        \end{subfigure}
        ~ 
        \begin{subfigure}[b]{0.45\textwidth}
                \centering
                \begin{tikzpicture}[baseline=1cm]
									  \node (s1) at (1,1) [tvertex]{$\star_2$};
										\node (2) at (2,.5) [tvertex] {$v_1$};
										\node (s2) at (3,1)[tvertex] {$\star_1$};
										\node (3) at (2,1.5) [tvertex] {$v_2$};
										\draw [->] (s2)--(3);
										\draw [->] (3)--(s1);
										\draw [->] (s1)--(2);
										\draw [->] (2)--(s2);	
										\draw[rotate=0]  (0.6,1) ellipse (24pt and 15pt); 
										\draw[rotate=0] (3.4,1) ellipse (24pt and 15pt); 
										\node (q1) at (0.3,1) [xvertex] {$Q_2$};
										\node (q2) at (3.7,1) [xvertex] {$Q_1$};					
									\end{tikzpicture}   
        \caption{Type 2}
        \label{Type 2}
        \end{subfigure}
        
        \begin{subfigure}[b]{0.80\textwidth}
         \centering
           \begin{tikzpicture}
  											\node (8) at (337.5:1cm) [tvertex] {$v_3$};
  											\node (7) at (292.5:1cm)    {};
											  \node (1) at (22.5:1cm) [tvertex] {$v_2$};
  											\node (2) at (67.5:1cm) [tvertex] {$v_1$};
  											\node (3) at (112.5:1cm) [tvertex] {$v_k$}; 
  											\node (4) at (157.5:1cm) [tvertex] {$v_{k-1}$}; 
  											\node (5) at (202.5:1cm) {};
  											\node (s1) at (45:2cm)[tvertex] {$\star_1$};
  											\node (s2) at (90:2cm) [tvertex] {$\star_{k}$};
  											\node (s3) at (135:2cm) [tvertex] {$\star_{k-1}$};
  											\node (s4) at (180:2cm) [tvertex] {$\star$};
  											\node (s7) at (315:2cm) [tvertex] {$\star$};
  											\node (s8) at (360:2cm) [tvertex] {$\star_2$};
  											\draw [->] (2)--(1);	
  											\draw [->] (3)--(2);	
											  \draw [->] (4)--(3);
  											\draw [->] (5)--(4);
  										  \draw [->] (1)--(8);
  											\draw[->] (8)--(7);
  											\draw [thick, loosely dotted] (5) .. controls (225:1cm) and (265:1cm)  .. (7);
 												\draw [->] (1)--(s1);
 												\draw [->] (s1)--(2);
 												\draw [->] (2)--(s2);
 												\draw [->] (s2)--(3);
 												\draw [->] (3) --(s3);
 												\draw [->] (s3)--(4);
 												\draw [->] (4)--(s4);
 												\draw [->] (s4)--(5);
											 \draw [->] (7)--(s7);
											 \draw [->]  (s7)--(8);
 											\draw [->] (8)--(s8);
 											\draw [->] (s8)--(1);
 											\draw[rotate=0] (2.4,0) ellipse (24pt and 15pt);
 											\draw[rotate=45] (0,0) (2.4,0) ellipse (24pt and 15pt);
 											\draw[rotate=90] (0,0) (2.4,0) ellipse (24pt and 15pt);
 											\draw[rotate=135] (0,0) (2.4,0) ellipse (24pt and 15pt);
 											\draw[rotate=-45] (0,0) (2.4,0) ellipse (24pt and 15pt);
 											\draw[rotate=180] (0,0) (2.4,0) ellipse (24pt and 15pt);
 											\node (S1) at (45:2.6cm)[tvertex] {$Q_1$};
 											\node (S2) at (90:2.6cm) [tvertex] {$Q_{k}$};
  										\node (S3) at (135:2.6cm) [tvertex] {$Q_{k-1}$};
  										\node (S4) at (180:2.6cm) [tvertex] {$Q$};
  										\node (S7) at (315:2.6cm) [tvertex] {$Q$};
  										\node (S8) at (360:2.6cm) [tvertex] {$Q_2$};
	        \end{tikzpicture}    
        \caption{Type 3}
        \label{Type 3}
        \end{subfigure}
       \caption{}\label{fig:}
       \label{treTypar}
\end{figure}
\noindent where $\star_i$ is a connecting vertex of $Q_i$, unless $Q_i$ is the empty quiver. Note that $Q_i$ can possibly consist of only one vertex and it can possibly be empty. The valency of $\star_i$ in the quiver as a whole might be greater or equal to three.

Quivers of type $3$ have a full subquiver which is a directed $k$-cycle, called the central cycle of the quiver. For each $i\in\left\{1,\cdots,k\right\}$, if the quiver $Q_i$ (see figure ~\ref{treTypar}) is not empty then there is an oriented $3$-cycle $v_i\rightarrow v_{i+1}\rightarrow \star_i\rightarrow v_i$ which is a full subquiver and where $v_{i+1}\rightarrow \star_i\rightarrow v_i$ is a non-zero path.

For the case $n=4$ it is clear that quivers of type $3$ are also of type $1$ or $2$, and that there are only two different quivers corresponding to special biserial cluster-tilted algebras. 

\begin{lemma}
If $n\geq 5$, then any cluster-tilted algebra with corresponding quiver of type $1$ or $2$ is not special biserial.
\end{lemma}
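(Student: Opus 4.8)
The plan is to work directly with the quiver $Q$ of $\Gamma$ together with the description, from \cite{BMR2}, of the relations of a cluster-tilted algebra of finite representation type, and to show that one of the two conditions (a), (b) in the definition of a special biserial algebra must fail. Since $n\geq 5$ while the vertices $\star_1,\star_2,v_1,v_2$ making up the ``core'' of a type~$1$ or type~$2$ quiver account for at most four vertices, at least one of $Q_1,Q_2$ must be a quiver on more than one vertex. The cores of the type~$1$ and type~$2$ quivers in figure~\ref{treTypar} each admit a symmetry interchanging $\star_1$ with $\star_2$ and $v_1$ with $v_2$ (reversing the central $4$-cycle in type~$2$, reversing all core arrows in type~$1$), and an algebra is special biserial if and only if its opposite is; so I may assume that $Q_1$ has more than one vertex. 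Its connecting vertex $\star_1$ then carries at least one arrow inside $Q_1$; and, since $\star_1$ separates $Q_1$ from the rest of $Q$, every oriented cycle of $Q$ lies entirely inside $Q_1$, entirely inside $Q_2$, or entirely inside the core.

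Using this, the minimal relations supported on the core are exactly the ones forced by the core alone, and \cite{BMR2} makes them explicit: for type~$1$, writing $a_i\colon\star_1\to v_i$, $b_i\colon v_i\to\star_2$, $c\colon\star_2\to\star_1$, one gets $a_ic\in I$, $cb_i\in I$ and $b_1a_1+xb_2a_2\in I$; for type~$2$, with central cycle $\star_1\xrightarrow{p}v_2\xrightarrow{q}\star_2\xrightarrow{r}v_1\xrightarrow{s}\star_1$, the four paths of length $3$ around the cycle lie in $I$, but no proper subpath of any of them does. Next I distinguish cases according to the arrows of $Q_1$ incident to $\star_1$. If one of them points out of $\star_1$ --- which is automatic when $\star_1$ has valency $2$ in $Q_1$, since it then lies on a $3$-cycle of $Q_1$ --- then: in type~$1$, $\star_1$ carries in addition the two outgoing arrows $a_1,a_2$, so at least three arrows start at $\star_1$ and (a) fails; in type~$2$, if $f$ denotes this outgoing $Q_1$-arrow then $p$ and $f$ both start at $\star_1$ while the two paths $ps$ and $fs$ are both not in $I$, so (b) fails. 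In the remaining case $\star_1$ has no outgoing $Q_1$-arrow, hence valency $1$ in $Q_1$, realised by a single arrow $g$ ending in $\star_1$: in type~$1$ the arrows starting in $\star_1$ are exactly $a_1$ and $a_2$, and $a_1g,a_2g\notin I$, so (b) fails; in type~$2$ the arrows ending in $\star_1$ are exactly $s$ and $g$, and $ps,pg\notin I$, so (b) fails. In each case $\Gamma$ is not special biserial.

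The point that needs care is the claim that the length-$2$ paths $ps$, $fs$, $pg$, $a_1g$, $a_2g$ are genuinely not in $I$, and this is exactly where the cut-vertex observation is used: each generating relation produced by \cite{BMR2} is supported on a single oriented cycle, hence lies wholly in $Q_1$, wholly in $Q_2$, or wholly in the core. The paths $fs$, $pg$, $a_1g$, $a_2g$ each combine one arrow of $Q_1$ with one arrow of the core, so none of them is a generating relation, and since their proper subpaths are single arrows they contain none either, whence they are not in $I$. The path $ps$ lies in the core and is a proper subpath of a cycle-relation of length $3$, so it too is not in $I$. With these non-vanishings established, the remainder is routine bookkeeping; the only genuine content is the dichotomy ``$\star_1$ has an outgoing arrow in $Q_1$, or it does not''.
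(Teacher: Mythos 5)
Your proof is correct and follows essentially the same route as the paper's: reduce to the case where $Q_1$ has at least two vertices, then split according to whether the connecting vertex $\star_1$ has an outgoing arrow in $Q_1$ or only an incoming one, producing in the first case a violation of condition (a) (three arrows starting at $\star_1$) and in the second a violation of condition (b) (an arrow with two non-zero length-two continuations through $\star_1$). Your write-up is in fact somewhat more careful than the paper's, which leaves the non-vanishing of the relevant length-two paths implicit (your cut-vertex observation supplies exactly the missing justification) and whose valency counts in the type $1$ case appear to be off by one.
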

\begin{proof}
Since $n\geq 5$ then at least one of $Q_1,Q_2$ has two or more vertices.

For type $1$ assume that $Q_1\neq \emptyset$ and that $Q_2=\emptyset$. Then $\star_1$ has valency three or four. If it has valency three, then either all arrows go out of $\star_1$ or there is one arrow going in to $\star_1$ and this arrow is part of two non-zero paths of length two passing through $\star_1$. If the valency of $\star_1$ is four, then there will be three arrows going out of $\star_1$. For $Q_1=\emptyset$ and $Q_2\neq \emptyset$ the arguments will be symmetric. 

If both $Q_1\neq\emptyset$ and $Q_2\neq\emptyset$ then at least one of $\star_1$ and $\star_2$ will have valency either four or five. If $\star_1$ has valency four, then there will be three arrows going out of $\star_1$ or there will be one arrow going in to $\star_1$ and this arrow is part of two non-zero paths of length two passing through $\star_1$. Further, if $\star_1$ has valency five it is clear that there will be three arrows going out of $\star_1$. The arguments for $\star_2$ will be symmetric.

For type $2$ assume that $Q_1\neq\emptyset$ and $Q_2=\emptyset$. Then $\star_1$ will have valency at least three and there is an arrow which is part of two non-zero paths of length two passing through $\star_1$. Similarly for $Q_2\neq\emptyset$, $Q_1=\emptyset$ and for $Q_1\neq\emptyset\neq Q_2$. 
\end{proof} 

\begin{lemma}\label{type3}
If $n\geq 5$, then a cluster-tilted algebra with quiver of type $3$ is special biserial if and only if for each $v_i$ in the central cycle, $Q_i$ is empty or has only one vertex.
\end{lemma}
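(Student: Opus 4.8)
The plan is to prove both directions by carefully analyzing the local structure of a type $3$ quiver at each vertex. For the "only if" direction, suppose some $Q_i$ has two or more vertices. Then $\star_i$ is a connecting vertex of $Q_i$, and since $Q_i$ has $\geq 2$ vertices, $\star_i$ has valency $\geq 1$ \emph{inside} $Q_i$; combined with the two arrows of the $3$-cycle $v_i\to v_{i+1}\to\star_i\to v_i$ coming from the central cycle, the vertex $\star_i$ has total valency $\geq 3$ in the whole quiver. I would then argue, exactly as in the proof of the previous lemma (type $1$ and type $2$), that this forces a violation of the special biserial conditions at $\star_i$: either there are three arrows starting (or ending) at $\star_i$, violating condition a), or there is an arrow at $\star_i$ that lies on two distinct non-zero paths of length two through $\star_i$, violating condition b). The key point is that the $3$-cycle through $\star_i$ contributes a non-zero path $v_{i+1}\to\star_i\to v_i$ (this is stated in the description of type $3$ quivers), and any additional arrow at $\star_i$ coming from $Q_i$ will be non-zero composable with one of $v_{i+1}\to\star_i$ or $\star_i\to v_i$ for parity/cycle reasons, giving the required second non-zero length-two path. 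So if some $Q_i$ has $\geq 2$ vertices, $\Gamma$ is not special biserial.

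For the "if" direction, suppose every $Q_i$ is empty or a single vertex. Then the quiver $Q$ consists of the central directed $k$-cycle $v_1\to v_2\to\cdots\to v_k\to v_1$, together with, for each $i$ with $Q_i\neq\emptyset$ (so $Q_i=\{\star_i\}$), a $3$-cycle $v_i\to v_{i+1}\to\star_i\to v_i$ glued along the edge $v_i\to v_{i+1}$. I would check condition a) by a case analysis at each vertex: a vertex $v_i$ has at most the arrows $v_{i-1}\to v_i$, $\star_{i-1}\to v_i$ (if $Q_{i-1}\neq\emptyset$) entering and $v_i\to v_{i+1}$, $v_i\to\star_i$ (if $Q_i\neq\emptyset$) leaving — at most two of each; and a vertex $\star_i$ has exactly one arrow in ($v_{i+1}\to\star_i$) and one out ($\star_i\to v_i$). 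For condition b), I would invoke the relations for cluster-tilted algebras of finite type from \cite{BMR2} recalled in Section 2: each edge $v_i\to v_{i+1}$ that lies in the $3$-cycle $v_i\to v_{i+1}\to\star_i\to v_i$ gives a zero-relation on the shortest return path $v_{i+1}\to\star_i\to v_i$, so the path $v_{i+1}\to\star_i\to v_i$ is zero; and the edge $v_i\to v_{i+1}$ also lies in the central $k$-cycle, so the long return path $v_{i+1}\to v_{i+2}\to\cdots\to v_i$ is zero as well (when $k\geq 3$; when $k$ is small the two cycles may coincide and one gets a commutativity relation instead, which I would handle separately). Feeding these zero-relations into condition b): for each arrow $a$, at most one arrow $b$ can be composed with it with non-zero product, because the composition $v_i\to v_{i+1}\to\star_i$ is the only non-zero continuation of $v_i\to v_{i+1}$ that stays "short", and the continuation into the central cycle $v_{i+1}\to v_{i+2}$ is killed, etc. This verifies b).

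The main obstacle I anticipate is the bookkeeping for small $k$, in particular $k=2$ and $k=3$, where the "central cycle" and the attached $3$-cycles interact degenerately: when $k=2$ the central $2$-cycle is not allowed as a cluster-tilted quiver and one is really in the situation of type $1$ or $2$ with single-vertex $Q_i$'s, so this should reduce to the $n=4$ discussion or be excluded by $n\geq 5$; and when $k=3$ the central $3$-cycle could itself be one of the attached $3$-cycles, so the relation analysis of \cite{BMR2} may produce a commutativity-relation rather than a zero-relation on an edge shared by two cycles, and I must make sure condition b) still holds in that case (it does: a commutativity relation $\rho_1+x\rho_2$ still means the paths in question, taken individually, are part of a gentle-like local picture with at most one non-zero continuation). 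Apart from this, everything is a finite local check at vertices of the three possible types $v_i$ (degree $\leq 4$), $\star_i$ (degree $2$), and it will be convenient to observe that, under the hypothesis, $Q$ is in fact $\mathcal{M}^A$-like glued along a central cycle and the resulting algebra is gentle, which immediately gives special biserial; framing the argument this way may shorten the verification considerably.
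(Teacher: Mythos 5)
Your ``only if'' direction is essentially identical to the paper's: if some $Q_i$ has at least two vertices then $\star_i$ has valency at least three, and since $v_{i+1}\to\star_i\to v_i$ is a non-zero path, the extra arrow at $\star_i$ produces an arrow lying on two non-zero paths of length two through $\star_i$, violating condition b). The paper's proof of the ``if'' direction is only a one-sentence assertion, so your attempt to verify conditions a) and b) explicitly goes beyond it; condition a) is fine. However, your relation analysis for condition b) contains a genuine error.

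Concretely, you claim that the edge $v_i\to v_{i+1}$ ``gives a zero-relation on the shortest return path $v_{i+1}\to\star_i\to v_i$, so the path $v_{i+1}\to\star_i\to v_i$ is zero,'' and separately that the long return path around the central cycle is zero. This misapplies the description from \cite{BMR2} recalled in Section 2: when $Q_i\neq\emptyset$ the arrow $v_i\to v_{i+1}$ lies on \emph{two} cycles (the attached $3$-cycle and the central $k$-cycle) for \emph{every} $k\geq 3$, not just for small $k$, so one gets a single commutativity relation $\rho_1+x\rho_2$ and neither return path is individually zero. Indeed the paper's description of type $3$ quivers states explicitly that $v_{i+1}\to\star_i\to v_i$ is a non-zero path --- and your own ``only if'' argument relies on exactly this non-vanishing, so your proposal is internally inconsistent. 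The actual zero-relations come from the arrows $\star_i\to v_i$ and $v_{i+1}\to\star_i$, each of which lies on exactly one cycle: they give $(v_i\to v_{i+1})(v_{i+1}\to\star_i)=0$ and $(\star_i\to v_i)(v_i\to v_{i+1})=0$. Your sentence asserting that $v_i\to v_{i+1}\to\star_i$ is ``the only non-zero continuation'' while the continuation into the central cycle ``is killed'' has the zero/non-zero pattern exactly backwards. The lemma is still true --- with the correct relations each arrow again has at most one non-zero composition on each side, since at every vertex of the central cycle one of the two possible length-two compositions is one of the genuine zero-relations above --- but as written your verification of condition b) rests on false identities and would need to be redone with the correct relations. (Your worry about $k=2$ is moot, since the central cycle of a type $3$ quiver has length at least $3$.)
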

\begin{proof}
If $Q_i$ has more than one vertex for some $i$, then the valency of $\star_i$ is at least three and since $ v_{i+1}\rightarrow \star_i\rightarrow v_i$ is a non-zero path, there is an arrow which is part of two non-zero paths of length two passing through $\star_i$. Thus the corresponding cluster-tilted algebra is not special biserial. However for all quivers of type 3 where for each $i$ the quiver $Q_i$ either consist of exactly one vertex or is empty, the corresponding cluster-tilted algebras are special biserial. 
\end{proof}

We want to describe all tilting objects $T$ in $\mathcal C$ such that $\Gamma=\End_{\mathcal C}(T)^{\op}$ is special biserial.

It is known by ~\cite[Theorem 4.1]{BOW} that for any quiver $Q$, where $Q$ is the quiver of a cluster-tilted algebra $\End_{\mathcal C}(T)^{\op}$, of type $3$ each vertex $v_i$  in the central cycle corresponds to an indecomposable $\alpha$-object in $T$. Furthermore, if $\beta:v_i\rightarrow v_{i+1}$ is an arrow in the central cycle such that $v_i$ corresponds to the indecomposable object $X$, then $v_{i+1}$ corresponds to $\tau\phi X$ if $Q_i$ is empty and to $\tau^2 X$ if $Q_i$ is not empty. 

For two tilting objects $T$ and $T'$ in $\mathcal C$ it is known by ~\cite{HermA} that $\End_{\mathcal C}(T)^{\op}$ and $\End_{\mathcal C}(T')^{\op}$ are isomorphic if and only if $T'=\phi^{i}\tau^{j} T$ for $i\in\{0,1\},j \in \mathbb Z$.

For an $\alpha$-object $Y$ we define $\gamma(Y)$ to be the unique object in the bottom row of the AR-quiver of $\mathcal{C}_{D_n}$ such that $\gamma(Y)$ has a non-zero map to both $\tau Y$ and $\tau\phi Y$. Uniqueness of $\gamma (Y)$ follows from the next proposition.

\begin{proposition}\label{prop1}
Let $T$ be a cluster-tilting object of type $D_n$, $n\geq 5$ and $\Gamma=\End_{\mathcal C}(T)^{\op}$ with quiver $Q_T$. Let $j$ be a vertex of $Q_T$ such that there is only one arrow $i\rightarrow j$ in $Q_T$ going in to $j$ and such that the summand in $T$ corresponding to $j$ is an $\alpha$-object $Y$ and the summand corresponding to $i$ is $\tau\phi Y$. Then there is an exchange triangle 
$$ \gamma(Y)\rightarrow \tau\phi Y \rightarrow Y \rightarrow \gamma(Y)\left[1\right]. $$
\end{proposition}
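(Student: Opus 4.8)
The plan is to identify the exchange triangle directly from the exchange-triangle machinery recalled in section 2, using the combinatorial description of type-$D_n$ cluster-tilted quivers. Recall that if $\bar T$ is an almost complete tilting object with complement $M$, then the two exchange triangles read $M^\ast\to B\to M\to M^\ast[1]$ and $M\to B'\to M^\ast\to M[1]$, where $B\to M$ is a minimal right $\add\bar T$-approximation of $M$. I would apply this with $M=Y$ and $\bar T$ the almost complete tilting object obtained by deleting $Y$ from $T$, and then argue that $M^\ast=\gamma(Y)$ and $B=\tau\phi Y$.

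First I would analyze the minimal right $\add\bar T$-approximation $B\to Y$. Since $j$ has a unique incoming arrow $i\to j$ in $Q_T$, and an arrow $i\to j$ in the quiver of $\End_{\mathcal C}(T)^{\op}$ corresponds (via $F=\Hom_{\mathcal C}(T,-)$ or directly via irreducible maps in $\add T$) to an irreducible map in $\add T$ from the summand at $j$ to the summand at $i$ — here one must be careful with the $\op$ convention — the hypothesis forces the source of the minimal right $\add\bar T$-approximation of $Y$ to be exactly $\tau\phi Y$, with multiplicity one; there are no other summands of $T$ mapping irreducibly. Hence $B=\tau\phi Y$, and the first exchange triangle becomes $Y^\ast\to\tau\phi Y\to Y\to Y^\ast[1]$. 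It then remains to identify $Y^\ast$.

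To pin down $Y^\ast$, I would use the structure of the AR-quiver of $\mathcal{C}_{D_n}$ in figure~\ref{figurD} together with the description of $\alpha$- and $\gamma$-objects. The map $\tau\phi Y\to Y$ must be nonzero, and composing/analyzing it in the mesh category shows that the cone $Y^\ast$ is an indecomposable object; a dimension or Euler-form count (or direct inspection of the finitely many shapes of $Y$ in the two top rows) shows $Y^\ast$ must lie in the bottom row, i.e. be a $\gamma$-object. Finally, applying $\tau$ to the triangle, or examining the other exchange triangle $Y\to B'\to Y^\ast\to Y[1]$, one sees that $Y^\ast$ admits nonzero maps to both $\tau Y$ and $\tau\phi Y$ (these are the two ``ends'' of the $\alpha$-region above the relevant $\gamma$-object); by the defining property this forces $Y^\ast=\gamma(Y)$, which simultaneously establishes the claimed uniqueness of $\gamma(Y)$.

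The main obstacle I expect is the bookkeeping in the second step: translating the quiver-theoretic hypothesis (``exactly one arrow $i\to j$, with the prescribed summands at $i$ and $j$'') into the statement that $\tau\phi Y$ is the \emph{entire} minimal right $\add\bar T$-approximation of $Y$ with multiplicity one — one must rule out extra summands of $T$ contributing to the approximation even though they do not show up as arrows (e.g. because of relations in $\Gamma$), and one must handle the $\op$ in $\Gamma=\End_{\mathcal C}(T)^{\op}$ so that the direction of the approximation comes out right. Once that is settled, identifying the cone as $\gamma(Y)$ is a finite case check on the shape of an $\alpha$-object in figure~\ref{figurD}, which is routine.
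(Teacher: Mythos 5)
Your proposal is correct in outline and shares its skeleton with the paper's proof --- both start from the exchange triangle $Y^{\ast}\rightarrow B\rightarrow Y\rightarrow Y^{\ast}[1]$ and both read off $B=\tau\phi Y$ from the hypothesis that $j$ has a unique incoming arrow (using the standard fact that the indecomposable summands of the minimal right $\add\bar T$-approximation of $T_j$ correspond exactly to the arrows of $Q_T$ ending at $j$, which disposes of your worry about ``hidden'' summands). Where you genuinely diverge is in identifying the third term. The paper first normalizes: using that $\End_{\mathcal C}(T)^{\op}\cong\End_{\mathcal C}(\phi^{i}\tau^{j}T)^{\op}$, it replaces $Y$ by the concrete module $M_2$ in a fixed orientation of $D_n$, observes that the approximation becomes the composite $gf\colon P_1\rightarrow M_2$, and computes the kernel by composition series to be $P_n$, a $\gamma$-object; the triangle is then the image of the short exact sequence $0\rightarrow P_n\rightarrow P_1\rightarrow M_2\rightarrow 0$. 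You instead keep $Y$ abstract and pin down $Y^{\ast}$ via the $2$-Calabi--Yau property: $\Hom_{\mathcal C}(Y^{\ast},\tau\phi Y)\neq 0$ because the triangle does not split, and $\Hom_{\mathcal C}(Y^{\ast},\tau Y)=\Ext^1_{\mathcal C}(Y^{\ast},Y)\cong D\Ext^1_{\mathcal C}(Y,Y^{\ast})\neq 0$ because $Y$ and $Y^{\ast}$ are the two complements; this is a clean argument the paper does not make explicit. The one place your write-up is thinner than the paper's is the claim that $Y^{\ast}$ lies in the bottom row: you defer this to ``a dimension or Euler-form count or direct inspection,'' whereas the paper's composition-series computation settles it concretely (the kernel is $P_n$). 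That verification is finite and routine, so I would not call it a gap, but it should be carried out --- and note that without it (or without separately checking that only one bottom-row object maps nontrivially to both $\tau Y$ and $\tau\phi Y$) the uniqueness of $\gamma(Y)$, which the paper explicitly derives from this proposition, is not yet established by your argument alone.
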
 
\begin{proof}
Let $T$ be a cluster-tilting object of type $D_n$ satisfying the assumptions in the proposition.  

Fix the orientation of $D_n$ to be
$$ \begin{tikzpicture}
\node (N) at (0,2) [vertex] {};
\node (1) at (1,1.75) [yvertex]{$1$};
\node (2) at (1,2.25) [yvertex]{$2$};
\node (3) at (2,2) [yvertex]{$3$};
\node (4) at (3,2)[yvertex] {$4$};
\node (dots) at (4.2,2)[yvertex] {$\cdots$};
\node (n-1) at (5.7,2)[yvertex] {$n-1$};
\node (n) at (7,2) [yvertex] {$n$};
\replacevertex{(N)}{[yvertex] {$Q':$}}
\draw [->] (1)--(3);
\draw [->] (2)--(3);
\draw [->] (3)--(4);
\draw [->] (4)--(dots);
\draw [->] (dots)--(n-1);
\draw [->] (n-1)--(n);
\end{tikzpicture} $$
and consider $T$ with the embedding of the module category of $KQ'$ in $\mathcal C$,\\
\begin{minipage}{\textwidth}
\[ \scalebox{.75}{\begin{tikzpicture}
 \node (Pn) at (1,1) {$P_n$};
 \node (Pn-1) at (2,2) {$P_{n-1}$};
 \node (dots1) at (3,3) {$\iddots $};
 \node (P4) at (4,4) {$P_4$};
 \node (P3) at (5,5) {$P_3$};
 \node (P1) at (6,7) {$P_1$}; 
 \node (P2) at (6,6) {$P_2$};
 \draw [->] (Pn)--(Pn-1);
 \draw [->] (Pn-1)--(dots1);
 \draw [->] (dots1)--(P4);
 \draw [->] (P4)--(P3);
 \draw [->] (P3)--(P1);
 \draw[->] (P3)--(P2);    
 \node (Mn) at (3,1) {$M_n$};
 \node (Mn-1) at (4,2) {$M_{n-1}$};
 \node (dots2) at (5,3) {$\iddots$};
 \node (M4) at (6,4) {$ M_4$};
 \node (M3) at (7,5) {$M_3 $};
 \node (M2) at (8,6) {$M_2 $};
 \node (M1) at (8,7) {$M_1 $};
 \draw[->] (Mn)--(Mn-1);
 \draw[->] (Mn-1)--(dots2);
 \draw[->] (dots2)--(M4);
 \draw[->] (M4)--(M3);
 \draw[->] (M3)--(M1);
 \draw[->] (M3)--(M2);
 \draw[->] (P1)--(M3);
 \draw[->] (P2)--(M3);
 \draw[->] (P3)--(M4);
 \draw[->] (P4)--(dots2);
 \draw[->] (dots1)--(Mn-1);
 \draw[->] (Pn-1)--(Mn);
 \node[above] at (7.5,5) {g};
 \node[above] at (6.7,5.9) {f};
 \node (dotsM) at (4,3) {$\vdots$};     
\end{tikzpicture} } \]
\end{minipage} 
We wish to exchange the indecomposable summand $Y$ in $T$. By applying the equivalences $\tau$ and $\phi$ to $Y$ an appropriate number of times we get $\tau^{i}\phi^{j}Y=M_2$, and can instead consider how to replace $M_2$ in $T'$, where $T'=\tau^{i}\phi^{j} T$.

By assumption the right $\add T'$-approximation of $M_2$ is the irreducible map $gf: P_1 \rightarrow M_2$ and by calculating the composition series of $P_1$ and $M_2$ it is easy to check that there is a short exact sequence $$0\rightarrow P_n \rightarrow P_1 \stackrel{gf}{\rightarrow} M_2\rightarrow 0 \label{korteksakt},$$
giving the exchange triangle $$P_n\rightarrow P_1 \rightarrow M_2 \rightarrow P_n \left[1\right].$$
\end{proof} 
In the rest of this section we will give an explicit description of the cluster-tilting objects $T$ such that $\End_{\mathcal C}(T)^{\op}$ is special biserial. 
\begin{theorem}
The special-biserial cluster-tilted algebras of type $D_n,\ n \geq 5,$ are those where the corresponding quiver $Q_T$ has the following shape;\newline
\begin{equation*}
\begin{tikzpicture}
  \node (8) at (337.5:1cm) [vertex] {};
  \node (7) at (292.5:1cm)    {};
  \node (1) at (22.5:1cm) [vertex] {};
  \node (2) at (67.5:1cm) [vertex] {};
  \node (3) at (112.5:1cm) [vertex] {}; 
  \node (4) at (157.5:1cm) [vertex] {}; 
  \node (5) at (202.5:1cm) {};
  \node (s1) at (45:1.5cm)[inner sep=1pt] {$\star$};
  \node (s2) at (90:1.5cm) [inner sep=1pt] {$\star$};
  \node (s3) at (135:1.5cm) [inner sep=1pt] {$\star$};
  \node (s4) at (180:1.5cm) [inner sep=1pt] {$\star$};
  \node (s7) at (315:1.5cm) [inner sep=1pt] {$\star$};
  \node (s8) at (360:1.5cm) [inner sep=1pt] {$\star$};
  \draw [->] (2)--(1);
  \draw [->] (3)--(2);
  \draw [->] (4)--(3);
  \draw [->] (5)--(4);
  \draw [->] (1)--(8);
  \draw[->] (8)--(7);
  \draw [thick, loosely dotted] (5) .. controls (225:1cm) and (265:1cm)  .. (7);
 \draw [->] (1)--(s1);
 \draw [->] (s1)--(2);
 \draw [->] (2)--(s2);
 \draw [->] (s2)--(3);
 \draw [->] (3) --(s3);
 \draw [->] (s3)--(4);
 \draw [->] (4)--(s4);
 \draw [->] (s4)--(5);
 \draw [->] (7)--(s7);
 \draw [->]  (s7)--(8);
 \draw [->] (8)--(s8);
 \draw [->] (s8)--(1);
 \draw[rotate=0] (2,0) ellipse (24pt and 15pt);
 \draw[rotate=45] (0,0) (2,0) ellipse (24pt and 15pt);
 \draw[rotate=90] (0,0) (2,0) ellipse (24pt and 15pt);
 \draw[rotate=135] (0,0) (2,0) ellipse (24pt and 15pt);
 \draw[rotate=-45] (0,0) (2,0) ellipse (24pt and 15pt);
 \draw[rotate=180] (0,0) (2,0) ellipse (24pt and 15pt);
 \node (S1) at (45:2.6cm)[tvertex] {$Q_{\star}$};
 \node (S2) at (90:2.6cm) [tvertex] {$Q_{\star}$};
 \node (S3) at (135:2.6cm) [tvertex] {$Q_{\star}$};
 \node (S4) at (180:2.6cm) [tvertex] {$Q_{\star}$};
 \node (S7) at (315:2.6cm) [tvertex] {$Q_{\star}$};
 \node (S8) at (360:2.6cm) [tvertex] {$Q_{\star}$};
\end{tikzpicture}
\end{equation*}
\newline
where for each $\star$ the quiver $Q_{\star}$ is either empty or one vertex.

The direct summands of $T$ corresponding to the central cycle of $Q_T$ are $\alpha$-objects, and the distribution of all summands of $T$ in the $AR$-quiver is as follows:
if $Y$ is a direct summand of $T$ that is an $\alpha$-object then 
\begin{itemize}
\item if the arrow going out of $Y$ belonging to the central cycle is part of a 3-cycle then it will go to $\tau^{2} Y$ and the last summand of the 3-cycle will be $\gamma(\tau\phi Y)$.
\begin{equation*}
\begin{tikzpicture}
  \node (1) at (22.5:2cm) [tvertex] {$\tau^{3}\phi Y$};
  \node (2) at (67.5:1.5cm) [tvertex] {$ \tau^{2} Y$};
  \node (3) at (112.5:1.5cm) [tvertex] {$Y$};
  \node (4) at (157.5:2cm) [tvertex] {$\tau^{-1}\phi Y$};
  \node (s2) at (90:2cm) [tvertex] {$\gamma(\tau\phi Y)$};
  
  \draw [->] (2)--(1);
  \draw [->] (3)--(2);
  \draw [->] (4)--(3);
  \draw [->] (2)--(s2);
  \draw [->] (s2)--(3);
\end{tikzpicture}
\end{equation*}
\item if the arrow going out of $Y$ belonging to the central cycle is not part of a three cycle then it will go to $\tau\phi Y$
\end{itemize} 
\end{theorem}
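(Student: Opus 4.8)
The plan is to combine the structural results already assembled in this section. By Lemma~\ref{type3} (together with the first two lemmas of the section), a cluster-tilted algebra of type $D_n$, $n\geq 5$, is special biserial if and only if its quiver is of type $3$ and every $Q_i$ attached to the central cycle is either empty or a single vertex. This is exactly the class of quivers $Q_T$ pictured in the statement, so the first sentence of the theorem requires no further argument beyond citing Lemma~\ref{type3}. It therefore remains to prove the claim about where in the AR-quiver of $\mathcal C_{D_n}$ the summands of $T$ sit.

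First I would invoke \cite[Theorem 4.1]{BOW}, quoted just before Proposition~\ref{prop1}: for a quiver of type $3$ each vertex $v_i$ of the central cycle corresponds to an $\alpha$-object of $T$, and if $\beta\colon v_i\to v_{i+1}$ is an arrow of the central cycle with $v_i\leftrightarrow X$, then $v_{i+1}\leftrightarrow \tau\phi X$ when $Q_i$ is empty and $v_{i+1}\leftrightarrow\tau^2 X$ when $Q_i$ is not empty. This immediately gives the two bulleted cases: when the outgoing central-cycle arrow at $Y$ is \emph{not} part of a $3$-cycle, the corresponding $Q_i$ is empty, so the next summand is $\tau\phi Y$; when it \emph{is} part of a $3$-cycle, $Q_i$ consists of exactly one vertex, so the next summand along the central cycle is $\tau^2 Y$. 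The remaining point is to identify the third summand of that $3$-cycle, i.e. the one coming from the single vertex of $Q_i$.

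For that I would use Proposition~\ref{prop1}. In the $3$-cycle $v_i\to v_{i+1}\to\star_i\to v_i$ with $v_i\leftrightarrow Y$ and $v_{i+1}\leftrightarrow\tau^2 Y$, the vertex $v_{i+1}$ receives a unique incoming arrow in $Q_T$ (namely from $v_i$) and $\tau^2 Y=\tau\phi(\tau\phi Y)$, so Proposition~\ref{prop1} applies with the $\alpha$-object $\tau\phi Y$ in place of its ``$Y$'': it yields an exchange triangle $\gamma(\tau\phi Y)\to\tau^2 Y\to Y\to\gamma(\tau\phi Y)[1]$. By the description of exchange triangles recalled in Section~2, the middle map $\tau^2 Y\to Y$ together with the connecting vertex realizes $\gamma(\tau\phi Y)$ as the complement obtained by exchanging $v_{i+1}$, which is precisely the summand attached at $\star_i$; hence the third summand of the $3$-cycle is $\gamma(\tau\phi Y)$, as claimed. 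The small accompanying AR-quiver picture is then just bookkeeping: $Y$ sits in the top two rows (it is an $\alpha$-object), $\tau^2 Y$ is two $\tau$-steps back along the same $\alpha$-strand, $\gamma(\tau\phi Y)$ is the unique bottom-row object with nonzero maps to both $\tau(\tau\phi Y)=\tau^2\phi Y$ and $\tau\phi(\tau\phi Y)=\tau^2 Y$ (using $\phi^2=\mathrm{id}$), and the arrows shown are the irreducible maps of $\mathcal C_{D_n}$ around $\gamma(\tau\phi Y)$.

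The main obstacle, and the step I would write out most carefully, is the identification in the previous paragraph of the summand produced by Proposition~\ref{prop1} with the summand sitting at the connecting vertex $\star_i$ of $Q_i$: one must check that the exchange triangle obtained really is the exchange at the vertex $v_{i+1}$ of $Q_T$ (rather than at some other vertex), which amounts to verifying that $\tau^2 Y\to Y$ is a minimal right $\add\bar T$-approximation for the almost complete tilting object $\bar T$ missing $\tau^2 Y$ --- this is where the hypothesis that $Q_i$ is a single vertex, so that $v_{i+1}$ has the required unique incoming arrow, is essential, and it is exactly the hypothesis of Proposition~\ref{prop1}. Everything else is a direct appeal to \cite{BOW}, \cite{HermA} and Lemma~\ref{type3}, plus elementary combinatorics in the AR-quiver of $\mathcal C_{D_n}$.
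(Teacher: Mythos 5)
Your overall strategy --- Lemma~\ref{type3} for the admissible quiver shapes, \cite[Theorem 4.1]{BOW} for the positions of the central-cycle summands, and Proposition~\ref{prop1} to pin down the summand at each $\star_i$ --- is the same as the paper's, and the first two steps are fine. The problem is in how you deploy Proposition~\ref{prop1}. You apply it to the finished tilting object $T$ at the vertex $v_{i+1}$ and then assert that ``the complement obtained by exchanging $v_{i+1}$'' is ``precisely the summand attached at $\star_i$.'' This cannot be right: exchanging the summand $\tau^2Y$ sitting at $v_{i+1}$ produces the \emph{other} complement of the almost complete tilting object $T\setminus\tau^2Y$, which by definition is an object \emph{not} occurring in $T$, whereas the summand at $\star_i$ is already a summand of $T$ (indeed of $T\setminus\tau^2Y$). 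Moreover Proposition~\ref{prop1} is not applicable where you invoke it: no vertex of $Q_T$ carries $\tau\phi Y$ (that object is precisely the one that has been mutated away when $Q_i$ is nonempty); the vertex $v_{i+1}$ need not have a unique incoming arrow (if $Q_{i+1}$ is also nonempty there is a second arrow $\star_{i+1}\to v_{i+1}$); and the triangle you write, $\gamma(\tau\phi Y)\to\tau^2Y\to Y\to\gamma(\tau\phi Y)[1]$, does not match the template $\gamma(Y')\to\tau\phi Y'\to Y'\to\gamma(Y')[1]$ for any substitution (taking $Y'=\tau\phi Y$ gives end term $\tau\phi Y$, not $Y$). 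The same confusion shows in your final paragraph, where ``$\tau^2Y\to Y$ is a minimal right $\add\bar T$-approximation for the almost complete tilting object $\bar T$ missing $\tau^2Y$'' is not meaningful: a right approximation attached to that $\bar T$ would be a map \emph{into} $\tau^2Y$.

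The paper runs the argument in the opposite direction, which is what makes Proposition~\ref{prop1} usable: start from the cycle tilting object $T_0$ whose summands are the $\alpha$-objects $\ldots,Y,\tau\phi Y,\tau^2Y,\ldots$ given by \cite[Theorem 4.1]{BOW}, and obtain $T$ by mutating $T_0$ at the $k$ pairwise non-adjacent vertices corresponding to the nonempty $Q_\star$'s. At each step Proposition~\ref{prop1} is applied to the tilting object \emph{before} that mutation, at the vertex that actually carries $\tau\phi Y$ and there satisfies the unique-incoming-arrow hypothesis; its exchange triangle says the mutation replaces $\tau\phi Y$ by $\gamma(\tau\phi Y)$, and this new summand is the one sitting at $\star_i$ in $Q_T$. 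The requirement that no two mutated vertices be adjacent on the cycle (the phenomenon you brush against when you worry about uniqueness of the incoming arrow) is exactly what keeps the hypotheses of Proposition~\ref{prop1} valid throughout the sequence. Your identification of the $\star_i$-summands needs to be reorganized along these lines before it is justified.
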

\begin{proof}
From Lemma ~\ref{type3} it is clear which quivers correspond to special biserial cluster-tilted algebras. Let $Q_T$ be a cycle, then it is known by~\cite[Theorem 4.1]{BOW} that all the indecomposable summands of $T$ are $\alpha$-objects, and that if $Y$ is one indecomposable summand of $T$ then the other summands of $T$ are given as follows;  
\begin{equation*}
\begin{tikzpicture}
 \node (8) at (337.5:1cm) [vertex] {};
 \node (7) at (292.5:1cm)  [vertex]{};
 \node (1) at (22.5:1cm) [tvertex] {$\tau^2Y$};
  \node (2) at (67.5:1cm) [tvertex] {$\tau \phi Y$};
  \node (3) at (112.5:1cm) [tvertex] {$Y$}; 
  \node (4) at (157.5:1cm) [vertex] {}; 
  \node (5) at (202.5:1cm) {};
  
   \draw [->] (2)--(1);
  \draw [->] (3)--(2);
  \draw [->] (4)--(3);
  \draw [->] (5)--(4);
  \draw [->] (1)--(8);
  \draw[->] (8)--(7);
  \draw [thick, loosely dotted] (5) .. controls (225:1cm) and (265:1cm)  .. (7);
  \end{tikzpicture}
\end{equation*}

If $Q_T$ is not a cycle, then let $k$ be the number of quivers $Q_{\star}$ which are not empty. By doing a sequence of $k$ mutations on a cycle we can get the quiver $Q_T$. The mutations should satisfy proposition ~\ref{prop1}, and in the sequence of mutations a vertex should not be mutated if any of its neighbors in the cycle has been mutated already.
\end{proof}
\begin{theorem}\label{typeDiff}
Let $T$ be a cluster-tilting object in $\mathcal{C}_{D_n}$. Then $\Gamma=\End_{\mathcal C}(T)^{\op}$ is special biserial if and only if all the indecomposable summands of $T$ are $\alpha$- and $\gamma$-objects.
\end{theorem}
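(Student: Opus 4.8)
The plan is to assemble the theorem from the structural facts already established in this section, treating the two implications separately and disposing of the small cases by hand. For $n\le 4$ the statement is checked directly using the remark above (for $n=4$ there are exactly two special biserial quivers, and for $D_{3}=A_{3}$ the situation is covered by type $A$); so I will assume $n\ge 5$ throughout.

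For the forward implication, suppose $\Gamma=\End_{\mathcal C}(T)^{\op}$ is special biserial. By the classification of the mutation class of $D_{n}$ recalled before Figure~\ref{treTypar}, the quiver $Q_{T}$ is of type $1$, $2$ or $3$. The lemma preceding Lemma~\ref{type3} shows that no type $1$ or type $2$ quiver gives a special biserial algebra once $n\ge 5$, so $Q_{T}$ is of type $3$; Lemma~\ref{type3} then forces each block $Q_{i}$ attached to the central cycle to be empty or a single vertex. For such a quiver the structure theorem established immediately before this one describes the position in the AR-quiver of $\mathcal C_{D_{n}}$ of every summand of $T$: the summands indexed by the central cycle are $\alpha$-objects, and the summand indexed by a pendant single vertex attached at a $\star$ is the $\gamma$-object $\gamma(\tau\phi Y)$ for the appropriate neighbouring $\alpha$-object $Y$. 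Hence every indecomposable summand of $T$ is an $\alpha$- or a $\gamma$-object, which is the content of this direction.

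For the converse I would argue by contraposition: assuming $\Gamma$ is \emph{not} special biserial, exhibit an indecomposable summand of $T$ that is neither an $\alpha$- nor a $\gamma$-object. By the two lemmas above, $\Gamma$ not special biserial means $Q_{T}$ is of type $1$ or $2$, or else of type $3$ with some $Q_{i}$ having at least two vertices. In the type $3$ case the cited result \cite[Theorem~4.1]{BOW} says that only the vertices of the central cycle carry $\alpha$-object summands, so it is enough to show that an interior vertex $w\ne\star_{i}$ of a block $Q_{i}$ with $|Q_{i}|\ge 2$ carries a summand that is not a $\gamma$-object; here one uses that $Q_{i}$ is a quiver in $\mathcal M^{A}$ glued to the central cycle along $\star_{i}$, so $w$ lies on an oriented $3$-cycle inside $Q_{i}$, and one checks from the shape of the AR-quiver of $\mathcal C_{D_{n}}$ and the description of exchange triangles that an object of the bottom row cannot occupy such an interior position. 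For types $1$ and $2$ one runs the same analysis on the blocks $Q_{1},Q_{2}$ (and on $v_{1},v_{2}$): since $n\ge 5$, at least one $Q_{j}$ has $\ge 2$ vertices, and an interior summand of that block is again neither an $\alpha$- nor a $\gamma$-object.

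The main obstacle is exactly this last step of the converse: ruling out $\gamma$-objects (and, via \cite{BOW}, $\alpha$-objects) at the interior vertices of an $\mathcal M^{A}$-block of $Q_{T}$, which requires a careful reading of how such blocks embed in $\mathcal C_{D_{n}}$. An alternative, possibly cleaner route mirrors the proof of the preceding structure theorem together with Proposition~\ref{prop1}: starting from $T$, repeatedly mutate at $\gamma$-object summands, show that each such mutation replaces a $\gamma$-object by an $\alpha$-object while fixing all other summands (so the number of $\gamma$-object summands strictly decreases), and conclude that $T$ is obtained from an all-$\alpha$ cluster-tilting object — which has a central-cycle type $3$ quiver by \cite{BOW} — by mutations each of which only attaches a single pendant vertex, whence $\Gamma$ is special biserial by Lemma~\ref{type3}. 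Verifying that a mutation at a $\gamma$-object summand always produces an $\alpha$-object, using the exchange triangles in $\mathcal C_{D_{n}}$, would be the crux of that second approach.
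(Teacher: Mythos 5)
Your forward implication is essentially the paper's: both deduce from the type-$1$/$2$/$3$ classification, Lemma~\ref{type3} and the structure theorem preceding the statement that a special biserial $\Gamma$ forces all summands of $T$ to be $\alpha$- or $\gamma$-objects. The problem is the converse, where you depart from the paper and leave the decisive step unproved. Your contrapositive route needs the claim that a vertex of $Q_T$ lying in a block $Q_i$ with at least two vertices (or in the type $1$/$2$ configurations) cannot carry an $\alpha$- or a $\gamma$-object. You yourself flag this as ``the main obstacle'' and do not supply the argument; moreover the only input you cite for the $\alpha$-part, \cite[Theorem~4.1]{BOW}, is quoted in the paper as saying that central-cycle vertices \emph{do} correspond to $\alpha$-objects, not that no other vertex does, so even the $\alpha$-exclusion is not covered by what you invoke. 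Your alternative route (mutate away the $\gamma$-summands) likewise rests on an unverified crux, namely that mutation at a $\gamma$-summand always returns an $\alpha$-object while fixing the rest. As written, neither version of the converse is a proof.

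The paper's converse is quite different and avoids this entirely: it takes a tilting object $T$ all of whose summands are assumed to be $\alpha$- or $\gamma$-objects and analyses directly, via the Ext-supports visible in the AR-quiver of $\mathcal{C}_{D_n}$, which such configurations can occur. One finds that an all-$\gamma$ tilting object is impossible (there are at most $\lfloor n/2\rfloor$ pairwise compatible $\gamma$-objects), that an all-$\alpha$ tilting object is $Y\oplus\tau^{-1}\phi Y\oplus\cdots\oplus\tau^{-(n-1)}\phi^{n-1}Y$ up to applying $\phi$, and that in the mixed case the $\gamma$-summands determine the remaining $\alpha$-summands uniquely. In every case $T$ meets the middle term of each AR-triangle with three middle terms, so the equivalence of Theorem~\ref{equivalence} gives $\beta(\Gamma)\leq 2$ and Theorem~\ref{WSkow} concludes. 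To salvage your version you would have to actually prove the exclusion statement for interior block vertices; otherwise the direct compatibility analysis is the cleaner path.
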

\begin{proof}
For cluster-tilted algebras of type $D_4$, it is easy to check that the fact is true. So we assume $n\geq 5$.
It follows from the previous theorem that if $\Gamma$ is special biserial then all the indecomposable summands of $T$ are either $\alpha$-objects or $\gamma$-objects. 

For the opposite direction we consider three cases, if $T$ consists of either $\alpha$-objects or $\gamma$-objects or a combination of the two. Assume that $T$ consists only of $\alpha$-objects. It can be checked that for any $\alpha$-object $Y$, the $\alpha$-objects that are compatible with $Y$ are $\mathcal{S}_Y=\phi Y\bigcup\left\{\tau^{-i}\phi^i Y\right\}_{i=1}^{n-1}$. If both $Y$ and $\phi Y$ are summands in $T$ then there are no more $\alpha$-objects that are compatible with both, and so $T$ can not be completed with only $\alpha$-objects. If $T$ contains $Y$ or $\phi Y$ then in both cases there is exactly one way to complete $T$ by using $\alpha$-objects, if $Y$ is in $T$ then $T=Y\oplus \tau^{-1}\phi Y\oplus \tau^{-2} Y\oplus \tau^{-3}\phi Y\oplus\cdots\oplus\tau^{-(n-1)}\phi^{n-1}Y$ and the completed tilting object $T'$ containing $\phi Y$ is $T'=\phi T$. Both $T$ and $T'$ has one summand from the middle term of each AR-triangle with three middle terms. Applying the equivalence $F$ in Theorem \ref{equivalence} on $\Gamma=\End_{\mathcal C}(T)^{\op}$ and $\Gamma^{'}=\End_{\mathcal C}(T')^{\op}$ will thus give that $\beta(\Gamma)\leq 2$ and $\beta(\Gamma^{'})\leq 2$, so $\Gamma$ and $\Gamma^{'}$ are special biserial by Theorem \ref{WSkow}.

We now look at the case where all the summands in $T$ are $\gamma$-objects. Let $X$ be a $\gamma$-object, then the white part of the following diagram shows the $\Ext$-support of $X$ in $\mathcal C$.
\[ \scalebox{0.8}{ \begin{tikzpicture}[scale=.5,yscale=-1]
 \fill[fill1](0,-0.25)--(9.5,-0.25)--(9,1)--(3,7.25)--(1,7.25)--(0,6)--cycle;
 \fill [fill1] (10.5,-0.25)--(16,-0.25)--(16,6)--(11,1)--cycle;
 \fill [fill1] (5,7.25)--(15,7.25)--(10,2)--cycle;
 \foreach \x in {0,...,8}
  \foreach \y in {0,1,3,5,7}
   \node (\y-\x) at (\x*2,\y) [vertex] {};
 \foreach \x in {0,...,7}
  \foreach \y in {2,4,6}
   \node (\y-\x) at (\x*2+1,\y) [vertex] {};
 \foreach \xa/\xb in {0/1,1/2,2/3,3/4,4/5,5/6,6/7,7/8}
  \foreach \ya/\yb in {0/2,1/2,3/4,5/4,7/6}
   {
    \draw [->] (\ya-\xa) -- (\yb-\xa);
    \draw [->] (\yb-\xa) -- (\ya-\xb);
   }
 \foreach \xa/\xb in {0/1,1/2,2/3,3/4,4/5,5/6,6/7,7/8}
  \foreach \ya/\yb in {3/2,5/6}
   {
    \draw [thick,loosely dotted] (\ya-\xa) -- (\yb-\xa);
    \draw [thick,loosely dotted] (\yb-\xa) -- (\ya-\xb);
   }
 \draw [dashed] (0,-.5) -- (0,7.5); 
 \draw [dashed] (16,-.5) -- (16,7.5);
 
 \replacevertex{(7-1)}{[tvertex][fill1] {$X$}} 
 \replacevertex{(0-5)}{[tvertex] {$Y$}} 
 \replacevertex{(7-5)}{[tvertex][fill1] {$\ldots$}} 
\end{tikzpicture} } \]
From the diagram it is clear that all $\gamma$-objects except $\tau X$ and $\tau^{-1} X$ are  compatible with $X$. Thus the maximal number of $\gamma$-objects in any tilting object in $\mathcal{C}_{D_n}$ is $\left\lfloor \frac{n}{2}\right\rfloor$, and there is no tilting object $T$ in $\mathcal{C}_{D_n}$ consisting only of $\gamma$-objects. 

Finally consider the case when $T$ consists of $\alpha$- and $\gamma$-objects. Fix a $k\in\left\{1,\ldots,\left\lfloor \frac{n}{2}\right\rfloor\right\}$ and let $\bar{T}=\oplus_{i=1}^{k}X_i$ where for $i,j\in\left\{1,\ldots,k\right\}$ the object $X_i$ is a $\gamma$-object such that for all $j\neq i, X_j\notin\left\{ \tau X_i, X_i, \tau^{-1} X_i\right\}$. Choose an $\alpha$-object $Y$ such that $\gamma(Y)\neq X_i$ for $i\in\{1,\ldots,k\}$, then $\bar{T}\oplus Y$ is a rigid object, but not a tilting object. We now try to complete $\bar{T}\oplus Y$ to a tilting object by considering the objects in $\mathcal{S}_Y$. It is clear that also $\bar{T}\oplus Y \oplus \phi Y$ is a rigid object but not a tilting object since $k+2<n$, and there are no more $\alpha$- or $\gamma$-objects that are compatible with $\bar{T}\oplus Y \oplus \phi Y$ so it can not be completed to a tilting object consisting only of $\alpha$- and $\gamma$- objects. For the objects in $\left\{\tau^{-i}\phi^i Y\right\}_{i=1}^{n-1}$ it is clear that for each $X_j$ in $\bar{T}$ there is a unique $i\in \{1,\ldots,n-1\}$ such that $X_j=\gamma(\tau^{-i}\phi^i Y)$. Hence there are $n-1-k$ objects in $\left\{\tau^{-i}\phi^i Y\right\}_{i=1}^{n-1}$ that are compatible with $\bar{T}\oplus Y$, so it can be completed to a tilting object $T$ and any rigid object consisting of only $\gamma$-objects can be completed to a tilting object with $\alpha$-objects in a unique way. Furthermore, it is clear that for $T$ a tilting object consisting of both $\alpha$- and $\gamma$-objects then for each $\alpha$-object $Z$ in $\mathcal{C}$ either $Z$ or $\phi Z$ is in $T$ or one of $\tau^{-1}Z$ and $\tau^{-1}\phi Z$ is in $T$, so by applying the equivalence $F=\Hom_{\mathcal C}(T,-)$ from Theorem ~\ref{equivalence}, $\mod \Gamma$ will not have any AR-sequences with three non-projective middle terms. 
\end{proof}

\section{Dynkin type E}\label{seksjonE}

We show that in the module category of any cluster-tilted algebra $\Gamma$ of type $E_6$, $E_7$ or $E_8$, there is at least one AR-sequence with more than two non-projective middle terms, such that $\Gamma$ is not special biserial by Theorem ~\ref{WSkow}. In \cite{bhl} there is a complete list of all quivers in the respective mutation classes, up to  derived  equivalence. It follow from this that any cluster-tilted algebra of cluster-type $E_6$, $E_7$ or $E_8$ is not special biserial, however we use a different approach in our proofs. 
\subsection{Cluster-tilted algebras of type $E_6$}
The AR-quiver of the cluster category of $E_6$ is a M\"obius band;
\[ \scalebox{.8}{ \begin{tikzpicture}[scale=.5,yscale=-1]
 \foreach \x in {0,...,7}
  \foreach \y in {1,3,5}
   \node (\y-\x) at (\x*3,\y) [vertex] {};
 \foreach \x in {0,...,6}
  \foreach \y in {2,4}
   \node (\y-\x) at (\x*3+1.5,\y) [vertex] {};
 \foreach \x in {0,...,6}
  \foreach \y in {3}
   \node (a-\x) at (\x*3+1.5,\y)[vertex] {};
  
  \replacevertex{(1-0)}{[tvertex] {$A$}}
  \replacevertex{(5-7)}{[tvertex] {$A'$}} 
 \foreach \xa/\xb in {0/1,1/2,2/3,3/4,4/5,5/6,6/7}
  \foreach \ya/\yb in {1/2,3/2,3/4,5/4}
   {
    \draw [->] (\ya-\xa) -- (\yb-\xa);
    \draw [->] (\yb-\xa) -- (\ya-\xb);
   }
 \foreach \xa/\xb in {0/1,1/2,2/3,3/4,4/5,5/6,6/7}
  \foreach \ya/\yb in {3/3}
  {
   \draw [->] (\ya-\xa) -- (a-\xa);
   \draw [->]  (a-\xa) -- (\yb-\xb);
  }
 \draw [dashed] (0,0.4) -- (0,5.6); 
 \draw [dashed] (21,0.4) -- (21,5.6);
\end{tikzpicture} } \]
where $A$ and $A'$ are identified.
\begin{proposition}\label{E6prop}
Let $T$ be a cluster-tilting object in the cluster category $\mathcal{C}_{E_6}$ and let $\Gamma=\End_{\mathcal{C}_{E_6}}(T)^{\op}$, then $\Gamma$ is not special biserial.
\end{proposition}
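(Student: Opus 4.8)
The plan is to apply Lemma~\ref{lemma1}. By Theorem~\ref{equivalence}, it suffices to produce a single indecomposable object $X$ in $\mathcal{C}_{E_6}$ satisfying the two hypotheses of that lemma: condition (a) that $\alpha_{\mathcal{C}}(X)=\alpha_{\mathcal{C}}(\tau X)=3$, and condition (b) that $\Hom_{\mathcal{C}}(T,Y)\neq 0$ for every $Y$ in the relevant finite list of objects (namely $X,\tau X,\tau^2 X$ together with the middle terms of the AR-triangles ending in $X$ and in $\tau X$). Since this $X$ must work for an \emph{arbitrary} cluster-tilting object $T$, the heart of the argument is to choose $X$ so that the objects in the list in (b) are so spread out across the AR-quiver (the M\"obius band pictured above) that no tilting object $T$ — which has only $6$ indecomposable summands and whose summands together with $\tau T$ must ``see'' everything via nonzero maps — can avoid mapping nontrivially into all of them.

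Concretely, first I would identify all indecomposable objects $X$ in $\mathcal{C}_{E_6}$ with $\alpha_{\mathcal{C}}(X)=3$; from the shape of the AR-quiver these are exactly the objects in the middle row (the row labelled by the $a$-vertices in the figure, where the band is ``pinched''), since those are the vertices with three arrows out. One then checks which of these also have $\tau X$ in that same row, i.e. $\tau X$ also has three middle terms in its AR-triangle — this pins down $X$ (up to the $\tau$-orbit structure) to a short list, essentially the objects lying where two consecutive columns of the middle row meet. For such an $X$, I would write down explicitly the finite set $\mathcal{S}$ of objects appearing in condition (b): the two AR-triangles give $\tau X \to (\text{three middle terms}) \to X$ and $\tau^2 X \to (\text{three middle terms}) \to \tau X$, so $\mathcal{S}$ consists of roughly eight to nine specific vertices of the AR-quiver, clustered in a connected region.

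The key step is then the combinatorial/representation-theoretic verification that for every cluster-tilting object $T$ of $\mathcal{C}_{E_6}$ and every $Y \in \mathcal{S}$ we have $\Hom_{\mathcal{C}}(T,Y)\neq 0$. The clean way to organize this is contrapositive: $\Hom_{\mathcal{C}}(T,Y)=0$ would force, via the structure of $\mathcal{C}$, that all summands of $T$ lie in a restricted ``co-support'' region of $Y$; I would show that the intersection over all $Y \in \mathcal{S}$ of these forbidden configurations cannot contain a full tilting object (one needs $6$ pairwise-compatible summands spanning the Grothendieck group). Equivalently, using $\Hom_{\mathcal{C}}(T,Y)=0 \iff \Hom_{\mathcal{C}}(T, \tau^{-1}Y)\cong D\Ext^1_{\mathcal{C}}(\tau^{-1}Y, T)$ vanishing is impossible for a tilting $T$ unless $\tau^{-1}Y$ is itself a summand of $T$ — but $\tau^{-1}Y$ being a summand for several incompatible choices of $Y$ is absurd. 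Since the M\"obius band has a symmetry, it should suffice to check this for one orbit representative $X$. I expect the main obstacle to be purely bookkeeping: correctly reading off the AR-quiver of $\mathcal{C}_{E_6}$ the $\Hom$- and $\Ext$-support regions (the ``forward/backward hammocks'') of the eight-or-so objects in $\mathcal{S}$, and confirming that their common complement is too small to house a tilting object — a finite but somewhat delicate case analysis, which is presumably why the authors remark they use ``a different approach'' than the derived-equivalence classification of~\cite{bhl}.
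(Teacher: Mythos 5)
Your overall plan---reduce to Lemma \ref{lemma1} and verify its hypotheses combinatorially on the AR-quiver---is the right general idea, but the way you set up the quantifiers creates a genuine gap. You propose to fix one object $X$ (independent of $T$) and then show that condition (b) holds for \emph{every} cluster-tilting object $T$. This cannot work. In $\mathcal{C}_{E_6}$ every indecomposable object is rigid, hence occurs as a summand of some cluster-tilting object, and by the duality you yourself invoke, $\Hom_{\mathcal{C}}(T,Y)\cong D\Ext^1_{\mathcal{C}}(\tau^{-1}Y,T)$ vanishes precisely when $\tau^{-1}Y\in\add T$. So for any fixed $X$, choosing $T$ to contain $\tau^{-1}Y$ for a single $Y$ in your list $\mathcal{S}$ already kills condition (b). Your closing remark that ``$\tau^{-1}Y$ being a summand for several incompatible choices of $Y$ is absurd'' misreads the logic: condition (b) fails as soon as \emph{one} $Y\in\mathcal{S}$ has $\tau^{-1}Y\in\add T$, not several, so there is no incompatibility to exploit. (A smaller slip: the objects with $\alpha_{\mathcal{C}}=3$ are those in the branching $\tau$-orbit with three arrows out, not the short ``$a$''-orbit at the pinch, and since that branching row is a single $\tau$-orbit, $\alpha_{\mathcal{C}}(X)=\alpha_{\mathcal{C}}(\tau X)=3$ holds for all of its objects, so nothing gets ``pinned down''.)

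The paper resolves the quantifier problem by letting $X$ depend on $T$. It first shows that the outermost $\tau$-orbit contains at most five pairwise compatible objects, so $T$ (which has six indecomposable summands) must have a summand $M$ in one of the other three $\tau$-orbits. For each such orbit it draws the region of objects compatible with $M$ and exhibits two consecutive AR-triangles with three middle terms lying entirely \emph{outside} that region; every object $Z$ in these triangles then satisfies $\Ext^1_{\mathcal{C}}(Z,M)\neq0$, hence $\Hom_{\mathcal{C}}(M,\tau Z)\neq0$ and so $\Hom_{\mathcal{C}}(T,\tau Z)\neq0$, which is exactly condition (b) of Lemma \ref{lemma1} for the appropriately $\tau$-shifted end term. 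If you reorganize your argument this way---quantifying over a summand $M$ of $T$, orbit by orbit, rather than over a universal $X$---your hammock bookkeeping becomes essentially the paper's proof.
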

\begin{proof}
First we show that $T$ can not consist entirely of indecomposable summands lying in the outermost $\tau$-orbit of the AR-quiver of $\mathcal C_{E_6}$, then there are three $\tau$-orbits/cases left to consider.

Let $M$ be an indecomposable summand of $T$ lying in the outermost $\tau$-orbit of the AR-quiver of $\mathcal C_{E_6}$. The grey areas in figure ~\ref{E6_P1} show which indecomposable objects are in the $\Ext$-support of $M$.
%
%
%
%
\begin{equation}
\label{E6_P1}
\scalebox{.8}{ \begin{tikzpicture}[baseline=-1.7cm,scale=.5,yscale=-1] 
\fill [fill1] (9,0.5)--(10.5,1.5)--(12,2.5)--(13.5,3.5)--(15,4.5)--(16.5,5.5)--(1.5,5.5)--cycle;
\fill[fill1] (0,0.5)--(4.5,0.5)--(1.5,2.5)--(0,1.5)--cycle;
\fill[fill1] (13.5,0.5)--(19.5,0.5)--(16.5,2.5)--cycle;
\fill[fill1] (18.75,3)--(19.5,2.5)--(20.25,3)--(19.5,3.5)--cycle;
\fill[fill1] (12.75,3)--(13.5,2.5)--(14.25,3)--(13.5,3.5)--cycle;
\fill[fill1] (3.75,3)--(4.5,2.5)--(5.25,3)--(4.5,3.5)--cycle;
\fill[fill1] (19.5,5.5)--(21,4.5)--(21,5.5)--cycle;
 \foreach \x in {0,...,7}
  \foreach \y in {1,3,5}
   \node (\y-\x) at (\x*3,\y) [vertex] {};
 \foreach \x in {0,...,6}
  \foreach \y in {2,4}
   \node (\y-\x) at (\x*3+1.5,\y) [vertex] {};
 \foreach \x in {0,...,6}
  \foreach \y in {3}
   \node (a-\x) at (\x*3+1.5,\y)[vertex] {};
  
   \replacevertex{(1-3)}{[tvertex][fill1] {$M$}}
   \replacevertex{(5-1)}{[tvertex][fill1] {$X$}}  
   \replacevertex{(5-5)}{[tvertex][fill1] {$\tau^{-4}X$}}  
 \foreach \xa/\xb in {0/1,1/2,2/3,3/4,4/5,5/6,6/7}
  \foreach \ya/\yb in {1/2,3/2,3/4,5/4}
   {
    \draw [->] (\ya-\xa) -- (\yb-\xa);
    \draw [->] (\yb-\xa) -- (\ya-\xb);
   }
 \foreach \xa/\xb in {0/1,1/2,2/3,3/4,4/5,5/6,6/7}
  \foreach \ya/\yb in {3/3}
  {
   \draw [->] (\ya-\xa) -- (a-\xa);
   \draw [->]  (a-\xa) -- (\yb-\xb);
  }
 \draw [dashed] (0,0.4) -- (0,5.6); 
 \draw [dashed] (21,0.4) -- (21,5.6);
 \draw [decorate,decoration=brace] (0,.25) -- node [above] {$a$} (4.5,.25);
 \draw [decorate,decoration=brace] (13.5,.25) -- node [above] {$a'$} (19.5,.25);
 \draw [decorate,decoration={brace,mirror}] (1.5,5.75) -- node [below] {$a''$} (16.5,5.75);
\end{tikzpicture} } 
 \end{equation}
%
%
%
%
Assume that all the indecomposable summands of $T$ lie in the outermost $\tau$-orbit together with $M$. It is clear that the two objects in the outermost $\tau$-orbit under the bracket marked $a$ are not compatible with each other, so at most one of these can be in $T$. The exact same argument applies for the bracket marked $a'$. Note that $M$ and $\tau^{-4}M$ are not compatible, so $X$ and $\tau^{-4}X$ are not compatible, see figure ~\ref{E6_P1}. Hence at most two can be chosen from the bracket marked $a''$ such that they are compatible with each other and with $M$. This gives at most five non-isomorphic indecomposable summands in $T$, but this is not sufficiently many for $T$ to be a complete tilting object. 

Now let $M$ be in the $\tau$-orbit indicated in figure \ref{E6_P3}. The grey areas show which indecomposable summands are compatible with $M$. It is clear that all indecomposable summands of a tilting object $T$ containing $M$, must lie in the grey areas of figure \ref{E6_P3}.
%
%
%
\begin{equation}
\label{E6_P3}
\scalebox{.8}{ \begin{tikzpicture}[baseline=-1.7cm,scale=.5,yscale=-1]
\fill [fill1] (1.5,0.5)--(10.5,0.5)--(6,3.5)--cycle;
\fill[fill1](1.5,5.5)--(6,2.5)--(10.5,5.5)--cycle;
\fill[fill1](6.75,3)--(7.5,2.5)--(8.25,3)--(7.5,3.5)--cycle;
\fill[fill1](3.75,3)--(4.5,2.5)--(5.25,3)--(4.5,3.5)--cycle;

 \foreach \x in {0,...,7}
  \foreach \y in {1,3,5}
   \node (\y-\x) at (\x*3,\y) [vertex] {};
 \foreach \x in {0,...,6}
  \foreach \y in {2,4}
   \node (\y-\x) at (\x*3+1.5,\y) [vertex] {};
 \foreach \x in {0,...,6}
  \foreach \y in {3}
   \node (a-\x) at (\x*3+1.5,\y)[vertex] {};
   
   \replacevertex{(3-2)}{[tvertex][fill1] {$M$}} 
 \foreach \xa/\xb in {0/1,1/2,2/3,3/4,4/5,5/6,6/7}
  \foreach \ya/\yb in {1/2,3/2,3/4,5/4}
   {
    \draw [->] (\ya-\xa) -- (\yb-\xa);
    \draw [->] (\yb-\xa) -- (\ya-\xb);
   }
 \foreach \xa/\xb in {0/1,1/2,2/3,3/4,4/5,5/6,6/7}
  \foreach \ya/\yb in {3/3}
  {
   \draw [->] (\ya-\xa) -- (a-\xa);
   \draw [->]  (a-\xa) -- (\yb-\xb);
  }
 \draw [dashed] (0,0.4) -- (0,5.6); 
 \draw [dashed] (21,0.4) -- (21,5.6);
\end{tikzpicture} }
\end{equation}
%
%
From figure \ref{E6_P3} it is easily seen that there are at least two $AR$-triangles with three middle terms such that all the indecomposable objects in the triangles are not grey and such that the end term of the first triangle is the starting term of the second triangle. By Lemma~\ref{lemma1} we thus have that $\Gamma=\End_{\mathcal C}(T)^{\op}$ is not special biserial if $T$ has a summand from this $\tau$-orbit.

Now assume that $M$ is in the second outermost $\tau$-orbit. As above, the grey areas of figure ~\ref{E6_P2} show which vertices correspond to indecomposable objects in $\mathcal C_{E_6}$ compatible with $M$. 
%
%
%
\begin{equation}
\label{E6_P2}
\scalebox{.8}{ \begin{tikzpicture}[baseline=-1.7cm,scale=.5,yscale=-1]
\fill [fill1] (4.5,1.5)--(10.5,5.5)--(0,5.5)--(0,4.5)--cycle;
\fill[fill1](1.5,0.5)--(7.5,0.5)--(4.5,2.5)--cycle;
\fill[fill1](0.75,3)--(1.5,2.5)--(2.25,3)--(1.5,3.5)--cycle;
\fill[fill1](6.75,3)--(7.5,2.5)--(8.25,3)--(7.5,3.5)--cycle;
\fill[fill1](11.25,1)--(12,0.5)--(12.75,1)--(12,1.5)--cycle;
\fill[fill1](17.25,5)--(18,4.5)--(18.75,5)--(18,5.5)--cycle;
 \foreach \x in {0,...,7}
  \foreach \y in {1,3,5}
   \node (\y-\x) at (\x*3,\y) [vertex] {};
 \foreach \x in {0,...,6}
  \foreach \y in {2,4}
   \node (\y-\x) at (\x*3+1.5,\y) [vertex] {};
 \foreach \x in {0,...,6}
  \foreach \y in {3}
   \node (a-\x) at (\x*3+1.5,\y)[vertex] {};
   
   \replacevertex{(2-1)}{[tvertex][fill1] {$M$}} 
 \foreach \xa/\xb in {0/1,1/2,2/3,3/4,4/5,5/6,6/7}
  \foreach \ya/\yb in {1/2,3/2,3/4,5/4}
   {
    \draw [->] (\ya-\xa) -- (\yb-\xa);
    \draw [->] (\yb-\xa) -- (\ya-\xb);
   }
 \foreach \xa/\xb in {0/1,1/2,2/3,3/4,4/5,5/6,6/7}
  \foreach \ya/\yb in {3/3}
  {
   \draw [->] (\ya-\xa) -- (a-\xa);
   \draw [->]  (a-\xa) -- (\yb-\xb);
  }
 \draw [dashed] (0,0.4) -- (0,5.6); 
 \draw [dashed] (21,0.4) -- (21,5.6);
\end{tikzpicture} } 
\end{equation}
%
%
From figure \ref{E6_P2} it is clear that also for this $\tau$-orbit there are at least two AR-triangles with three middle terms such that all the indecomposables are not in the grey areas, and such that the end term of the first triangle is the starting term of the second triangle. By Lemma~\ref{lemma1} we thus have that $\Gamma=\End_{\mathcal{C}_{E_6}}(T)^{\op}$ is not special biserial if $T$ has a summand from this $\tau$-orbit.

Finally assume that $M$ is in the $\tau$-orbit shown in figure ~\ref{E6_P7}.
%
%
\begin{equation}
\label{E6_P7}
 \scalebox{.8}{ \begin{tikzpicture}[baseline=-1.7cm,scale=.5,yscale=-1]
\fill [fill1] (1.5,0.5)--(13.5,0.5)--(9.75,3)--(13.5,5.5)--(1.5,5.5)--(5.25,3)--cycle;
\fill[fill1](0.75,3)--(1.5,2.5)--(2.25,3)--(1.5,3.5)--cycle;
\fill[fill1](12.75,3)--(13.5,2.5)--(14.25,3)--(13.5,3.5)--cycle;
\fill[fill1](17.25,1)--(18,0.5)--(18.75,1)--(18,1.5)--cycle;
\fill[fill1](17.25,5)--(18,4.5)--(18.75,5)--(18,5.5)--cycle;
 \foreach \x in {0,...,7}
  \foreach \y in {1,3,5}
   \node (\y-\x) at (\x*3,\y) [vertex] {};
 \foreach \x in {0,...,6}
  \foreach \y in {2,4}
   \node (\y-\x) at (\x*3+1.5,\y) [vertex] {};
 \foreach \x in {0,...,6}
  \foreach \y in {3}
   \node (a-\x) at (\x*3+1.5,\y)[vertex] {};
   
   \replacevertex{(a-2)}{[tvertex][fill1] {$M$}} 
 \foreach \xa/\xb in {0/1,1/2,2/3,3/4,4/5,5/6,6/7}
  \foreach \ya/\yb in {1/2,3/2,3/4,5/4}
   {
    \draw [->] (\ya-\xa) -- (\yb-\xa);
    \draw [->] (\yb-\xa) -- (\ya-\xb);
   }
 \foreach \xa/\xb in {0/1,1/2,2/3,3/4,4/5,5/6,6/7}
  \foreach \ya/\yb in {3/3}
  {
   \draw [->] (\ya-\xa) -- (a-\xa);
   \draw [->]  (a-\xa) -- (\yb-\xb);
  }
 \draw [dashed] (0,0.4) -- (0,5.6); 
 \draw [dashed] (21,0.4) -- (21,5.6);
\end{tikzpicture} } 
\end{equation}
%
%
For this $\tau$-orbit there are exactly two AR-triangles where all the indecomposable do not lie in the grey areas, and such that the ending term of the first triangle is the starting term of the second triangle. Thus $\Gamma=\End_{\mathcal C}(T)^{\op}$ is not special biserial if $T$ has a summand from this $\tau$-orbit.
\end{proof}
%
%
%
%
%
\subsection{Cluster-tilted algebras of type $E_7$ and $E_8$}
We begin this subsection by stating a proposition from ~\cite{BOW}. The next proposition follows from applying Theorem 4.9 ~\cite{IY} and is stated explicitly in ~\cite{BOW}.

We get the category $M^{\perp}_{\mathcal{C}}=\{ X\in\mathcal C \mid \Hom_{\mathcal{C}}(X,M\left[1\right])=0\}/(M)$ by taking the subcategory of $\mathcal{C}$ consisting of all objects without extensions with $M$ and then taking the quotient by the ideal consisting of all maps factoring through $M$.

\begin{proposition}\label{BOWprop}
Let $\mathcal{C}$ be the cluster category of a hereditary algebra $H$, and let $M\in\mathcal C$ be indecomposable and rigid. Then $M^{\perp}_{\mathcal C}$ is equivalent to the cluster category $\mathcal{C'}$ of a hereditary algebra $H'$ with exactly one less isomorphism class of simple objects than $H$. If $M$ is a shift of an indecomposable projective module $He$, then $M^{\perp}_{\mathcal C}$ is the cluster category of $H/HeH$.
\end{proposition}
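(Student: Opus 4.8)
The plan is to deduce the statement from the $2$-Calabi--Yau reduction of Iyama and Yoshino, \cite[Theorem~4.9]{IY}, together with the recognition theorem of Keller and Reiten that an algebraic, Hom-finite $2$-Calabi--Yau triangulated category admitting a cluster-tilting object with hereditary endomorphism algebra is triangle equivalent to the cluster category of that algebra. Recall that $\mathcal C=\mathcal C_H$ is such a category --- Hom-finite, Krull--Schmidt, algebraic, with Serre functor $[2]$, so that $\tau_{\mathcal C}\cong[1]$ --- and that the image $\overline H=\bigoplus_i\overline{P_i}$ of $H$ in $\mathcal C$ is a cluster-tilting object; moreover $\End_{\mathcal C}(\overline H)\cong H$, for instance because the cluster-tilted algebra of the canonical tilting object is the relation-extension of $H$, which is $H$ itself since $\Ext^2_H=0$.

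First I would reduce to the case in which $M$ is a summand of $\overline{H'}$ for some hereditary algebra $H'$ derived equivalent to $H$. An indecomposable rigid $M$ lifts to an exceptional object of $D^b(\mod H)$, and I would use the structural fact that every exceptional object in the bounded derived category of a hereditary algebra is, up to shift, an indecomposable projective module over a derived-equivalent hereditary algebra --- equivalently, that $M$ lies in some cluster-tilting object of $\mathcal C$ whose endomorphism algebra is hereditary. Using \cite[Theorem~3.3]{BMRRT} one then identifies $\mathcal C_H\simeq\mathcal C_{H'}$ for a suitable hereditary $H'$ so that $M$ corresponds to a shift of an indecomposable projective; since $\tau_{\mathcal C}\cong[1]$ is an autoequivalence the shift may be absorbed, so we may assume $M=\overline{H'e}$ for a primitive idempotent $e$. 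As a triangle equivalence $\mathcal C_H\simeq\mathcal C_{H'}$ carries $M^{\perp}_{\mathcal C_H}$ onto the corresponding perpendicular category in $\mathcal C_{H'}$, this reduction is harmless; note that the second assertion of the proposition is exactly this case with $H'=H$.

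Since $\mathcal C$ is $2$-Calabi--Yau one has $\Hom_{\mathcal C}(X,M[1])\cong D\Hom_{\mathcal C}(M,X[1])$ for all $X$, so $M^{\perp}_{\mathcal C}$ agrees with the category $\{X\mid\Hom_{\mathcal C}(M,X[1])=\Hom_{\mathcal C}(X,M[1])=0\}/(M)$ to which \cite[Theorem~4.9]{IY} applies for the rigid object $M$. That theorem makes $M^{\perp}_{\mathcal C}$ into a triangulated category --- with shift given by triangles $X\to M_X\to X\langle 1\rangle\to X[1]$, $M_X\to X$ a minimal right $\add M$-approximation --- which is again Hom-finite, Krull--Schmidt, algebraic and $2$-Calabi--Yau, and in which $\overline H/(M)$ is a cluster-tilting object with $\End_{M^{\perp}_{\mathcal C}}(\overline H/(M))\cong\End_{\mathcal C}(\overline H)/(e)\cong H/HeH$. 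The algebra $H/HeH$ is the path algebra of the quiver of $H$ with vertex $e$ removed, hence is hereditary with exactly $n-1$ isomorphism classes of simple modules. Applying the Keller--Reiten recognition theorem to $M^{\perp}_{\mathcal C}$ with the cluster-tilting object $\overline H/(M)$ now yields $M^{\perp}_{\mathcal C}\simeq\mathcal C_{H/HeH}$. This proves the second assertion; unwinding the reduction above gives, for arbitrary indecomposable rigid $M$, an equivalence $M^{\perp}_{\mathcal C}\simeq\mathcal C_{H'/H'eH'}$ with $H'/H'eH'$ hereditary and with $n-1$ simples, which is the first assertion.

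The step I expect to be the main obstacle is the reduction in the second paragraph. While $M$ certainly lies in \emph{some} cluster-tilting object $T$, the algebra $\End_{\mathcal C}(T)/(e)$ need not be hereditary for an arbitrary such $T$, so one must exploit the structure of exceptional objects over hereditary algebras (or, equivalently, choose $T$ via \cite[Theorem~3.3]{BMRRT} after passing to a derived-equivalent hereditary algebra) in order to arrange that $M$ becomes a shifted indecomposable projective and hence that $\End_{\mathcal C}(T)$ is itself hereditary. Once this is done, the remaining steps are formal: the passage to $M^{\perp}_{\mathcal C}$ is \cite[Theorem~4.9]{IY}, the identification $\End_{M^{\perp}_{\mathcal C}}(\overline H/(M))\cong H/HeH$ is the short computation above, and the recognition of $M^{\perp}_{\mathcal C}$ as a cluster category is the Keller--Reiten theorem for acyclic $2$-Calabi--Yau categories.
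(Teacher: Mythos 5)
The paper itself contains no proof of this proposition: it is quoted from \cite{BOW} with the remark that it follows from \cite[Theorem 4.9]{IY}, so there is no internal argument to compare against. Your overall strategy --- Iyama--Yoshino reduction to make $M^{\perp}_{\mathcal C}$ an algebraic, Hom-finite, $2$-Calabi--Yau triangulated category with cluster-tilting objects, followed by the Keller--Reiten recognition theorem applied to a cluster-tilting object with hereditary endomorphism algebra --- is the standard route and is surely what \cite{BOW} intend. Your proof of the second assertion is correct: the identification $\End_{M^{\perp}_{\mathcal C}}(\overline H/(M))\cong (1-e)H(1-e)/(1-e)HeH(1-e)\cong H/HeH$ is right, $H/HeH$ is the path algebra of the quiver of $H$ with the vertex of $e$ deleted, hence hereditary with $n-1$ simples, and the recognition theorem applies.

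The gap is precisely in the reduction you flag as the main obstacle. The structural fact you invoke --- that every exceptional object of $D^b(\mod H)$ is, up to shift, an indecomposable projective over a derived-equivalent hereditary algebra, equivalently that every rigid indecomposable of $\mathcal C$ is a summand of a cluster-tilting object with hereditary endomorphism algebra --- is false when $H$ is representation-infinite. A cluster-tilting object with hereditary endomorphism algebra is induced by a slice module over a derived-equivalent hereditary algebra, so all of its indecomposable summands are transjective; but for $H$ tame (say of type $\widetilde{D}_n$, or $\widetilde{A}_{p,q}$ with $p\geq 2$) there exist rigid indecomposables that are regular, e.g.\ quasi-simple modules in a non-homogeneous tube, and these can never occur in such an object. \cite[Theorem 3.3]{BMRRT} only places $M$ inside a cluster-tilting object induced by a tilting module over a derived-equivalent hereditary algebra; it does not let you move $M$ onto a shifted projective. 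To prove the first assertion in the stated generality one needs a different source of a cluster-tilting object with hereditary endomorphism algebra in $M^{\perp}_{\mathcal C}$; the standard one is the Geigle--Lenzing/Schofield perpendicular category of the exceptional module $M$ inside the module category, which is equivalent to $\mod H''$ for a hereditary $H''$ with $n-1$ simples even when $M$ is regular. Note that in the paper the proposition is only ever applied with $M$ a shifted indecomposable projective in Dynkin type $E$ (where, moreover, every indecomposable lies on a complete slice), so your argument covers everything the paper actually uses; the gap concerns only the proposition in its full stated generality.
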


\begin{corollary}\label{BOWcor}~\cite{IY}(see also~\cite{BOW}).
Let $M$ be a rigid indecomposable object in the cluster category $\mathcal C$ of $H$, then there is a bijection between cluster-tilting objects in $\mathcal C$ containing $M$ and cluster-tilting objects in $\mathcal{C}'=\mathcal{C}_{H'}$, where $H'$ is a hereditary algebra with exactly one less isomorphism class of simple objects than $H$ and $\mathcal{C}'$ is equivalent to $M^{\perp}_{\mathcal C}$.
\end{corollary}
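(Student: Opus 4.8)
The plan is to deduce the bijection directly from Proposition \ref{BOWprop} together with the standard correspondence between rigid objects (resp. cluster-tilting objects) in a 2-Calabi--Yau triangulated category and rigid objects (resp. cluster-tilting objects) in the Calabi--Yau reduction $M^{\perp}_{\mathcal C}$, which is the content of Theorem 4.9 of \cite{IY}. Since $\mathcal C$ is 2-Calabi--Yau with cluster-tilting objects, the general machinery of Iyama--Yoshino applies, and $M^\perp_{\mathcal C}$ is again a 2-Calabi--Yau category; Proposition \ref{BOWprop} identifies it with the cluster category $\mathcal C' = \mathcal C_{H'}$ of a hereditary algebra $H'$ having one fewer simple.

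First I would set up the map in one direction: given a cluster-tilting object $T$ in $\mathcal C$ with $M$ as a direct summand, write $T = M \oplus \overline{T}$ and send it to the image of $\overline{T}$ in the quotient $M^\perp_{\mathcal C}$. By \cite[Theorem 4.9]{IY} this image is a cluster-tilting object of $M^\perp_{\mathcal C}$, and hence, transporting along the equivalence $M^\perp_{\mathcal C} \simeq \mathcal C'$ of Proposition \ref{BOWprop}, a cluster-tilting object of $\mathcal C'$. Conversely, any cluster-tilting object $U$ of $\mathcal C'$ corresponds under the equivalence to a cluster-tilting object $\overline{U}$ of $M^\perp_{\mathcal C}$, and by \cite[Theorem 4.9]{IY} there is a unique (up to isomorphism) cluster-tilting object of $\mathcal C$ containing $M$ whose complement maps to $\overline{U}$; this is the preimage. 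One then checks the two assignments are mutually inverse, which is immediate once the Iyama--Yoshino correspondence is invoked, since that correspondence is itself a bijection on isomorphism classes. The fact that $H'$ has exactly one less isomorphism class of simples is precisely the count of indecomposable summands: a cluster-tilting object of $\mathcal C$ has $n$ summands (one of which is $M$), so its image has $n-1$ summands, matching the number of simples of $H'$.

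The main obstacle — really the only nontrivial point — is that the bijectivity and the cluster-tilting-preservation are being borrowed wholesale from \cite[Theorem 4.9]{IY} and from Proposition \ref{BOWprop}; the work is in verifying the hypotheses of that theorem are met in our setting, namely that $\mathcal C$ is a Krull--Schmidt, 2-Calabi--Yau triangulated category admitting cluster-tilting objects and that $M$ is rigid, all of which hold for cluster categories of hereditary algebras. Everything else is a formal transport of structure along the equivalence $M^\perp_{\mathcal C} \simeq \mathcal C_{H'}$ supplied by Proposition \ref{BOWprop}, so no computation is required.
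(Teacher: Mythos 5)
Your proposal is correct and matches the paper's intent: the paper offers no proof of this corollary beyond citing \cite{IY} (Theorem 4.9) and \cite{BOW}, and your argument is precisely the standard unpacking of those citations, combining the Iyama--Yoshino reduction bijection with the identification $M^{\perp}_{\mathcal C}\simeq\mathcal{C}_{H'}$ from Proposition \ref{BOWprop}. The verification of hypotheses (Krull--Schmidt, 2-Calabi--Yau, existence of cluster-tilting objects, rigidity of $M$) and the summand count are exactly the points one would need to check, and they all hold here.
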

This means that for any cluster-tilting object $T$ in $\mathcal C$ having an indecomposable summand $M$, where $M$ is the shift of an indecomposable projective module $He$, then removing $M$ from $T$ gives an object which is a cluster-tilting object in a cluster category equivalent to the cluster category $\mathcal{C}'=M_{\mathcal C}^{\perp}=\mathcal{C}_{H/HeH}$.

The AR-quiver of the cluster category of $E_7$ is a cylinder:
\begin{equation}
 \scalebox{.75}{ \begin{tikzpicture}[baseline=-1.7cm,scale=.5,yscale=-1]
 \foreach \x in {0,...,10}
  \foreach \y in {1,3,5}
   \node (\y-\x) at (\x*3,\y) [vertex] {};
 \foreach \x in {0,...,9}
  \foreach \y in {2,4,6}
   \node (\y-\x) at (\x*3+1.5,\y) [vertex] {};
 \foreach \x in {0,...,9}
  \foreach \y in {3}
   \node (a-\x) at (\x*3+1.5,\y) [vertex] {};
 \foreach \xa/\xb in {0/1,1/2,2/3,3/4,4/5,5/6,6/7,7/8,8/9,9/10}
  \foreach \ya/\yb in {1/2,3/2,3/4,5/4,5/6}
   {
    \draw [->] (\ya-\xa) -- (\yb-\xa);
    \draw [->] (\yb-\xa) -- (\ya-\xb);
   }
   \foreach \xa/\xb in {0/1,1/2,2/3,3/4,4/5,5/6,6/7,7/8,8/9,9/10}
    \foreach \ya/\yb in {3/3}
    {
     \draw [->] (\ya-\xa)--(a-\xa);
     \draw [->] (a-\xa)--(\yb-\xb);   
    }
 \draw [dashed] (0,0.4) -- (0,6.6); 
 \draw [dashed] (30,0.4) -- (30,6.6);
\end{tikzpicture} } 
\end{equation}
\begin{proposition}\label{E7prop}
Let $T$ be a tilting object in the cluster category $\mathcal C$ of $E_7$ and let $\Gamma=\End_{\mathcal C}(T)^{\op}$, then $\Gamma$ is not special biserial.
\end{proposition}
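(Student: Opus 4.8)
The plan is to imitate the proof of Proposition \ref{E6prop}, organising everything according to which $\tau$-orbit a fixed indecomposable summand $M$ of $T$ lies in. The AR-quiver of $\mathcal C_{E_7}$ is the cylinder drawn above; it has seven $\tau$-orbits, one for each vertex of the $E_7$-diagram, and the orbit attached to the branch vertex is the unique orbit all of whose meshes have three middle terms. In particular, for any indecomposable $X$ in that branch orbit one automatically has $\alpha_{\mathcal C}(X)=\alpha_{\mathcal C}(\tau X)=3$, so assumption a) of Lemma \ref{lemma1} comes for free as soon as $X$ is taken there. So I first fix $M$ and split into cases by its orbit.

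For the single orbit whose $\perp$-category has type $E_6$ (the boundary orbit containing the image of the simple projective at the long-arm vertex) I would use the reduction. If $T$ has a summand $M$ there, then by Corollary \ref{BOWcor} and Proposition \ref{BOWprop} the object $T\setminus M$ is a cluster-tilting object in a cluster category equivalent to $\mathcal C_{E_6}$, and its cluster-tilted algebra is $\Gamma/\Gamma e\Gamma$ for the idempotent $e$ at the vertex corresponding to $M$. A special biserial algebra stays special biserial after factoring out an idempotent ideal, since conditions a) and b) are inherited by the restricted quiver and relations; so if $\Gamma$ were special biserial then so would be $\Gamma/\Gamma e\Gamma=\End_{\mathcal C_{E_6}}(T\setminus M)^{\op}$, contradicting Proposition \ref{E6prop}.

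For the interior orbits I would run Lemma \ref{lemma1}. If $M$ lies in an interior orbit, then every summand of $T$ is compatible with $M$, hence every summand of $\tau T$ lies outside the support of $\Ext^1_{\mathcal C}(\tau M,-)$; therefore, by Theorem \ref{equivalence}, $\Hom_{\mathcal C}(T,Y)\neq 0$ for every indecomposable $Y$ in that support. So it is enough to exhibit, for each interior orbit, a branch-orbit object $X$ such that $X$, $\tau X$, $\tau^2X$ and the six indecomposable middle terms of the AR-triangles ending in $X$ and in $\tau X$ all lie in the support of $\Ext^1_{\mathcal C}(\tau M,-)$. Then both hypotheses of Lemma \ref{lemma1} hold and $\Gamma$ is not special biserial. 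Concretely, exactly as in figures \ref{E6_P3}, \ref{E6_P2} and \ref{E6_P7}, this amounts to drawing the $\Ext$-hammock of a representative of $\tau M$ on the cylinder and checking that it contains three consecutive columns around the branch orbit together with their mesh neighbours. Finally, the boundary orbits not reached by the $E_6$-reduction are handled, as in Proposition \ref{E6prop}, by a counting argument: the incompatibilities visible in the $\Ext$-hammock of a boundary object bound the number of pairwise compatible, non-isomorphic indecomposables inside a single boundary orbit strictly below $7$, so no tilting object can have all its summands in one boundary orbit.

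The main obstacle will be exactly this hammock bookkeeping on the cylinder: one must compute, for one representative in each of the seven $\tau$-orbits, precisely which region of $\mathcal C_{E_7}$ is its $\Ext$-support, and then verify — after translating by $\tau$ — either that the region swallows a whole stretch of two three-middle-term AR-triangles (interior orbits) or that it is thin enough to force the counting bound (boundary orbits). A subsidiary point, which needs care, is to check that the three regimes — reduction via Corollary \ref{BOWcor}, Lemma \ref{lemma1}, and counting — together cover every $\tau$-orbit; in particular the two boundary orbits whose $\perp$-categories are of type $D_6$ and $A_6$, which by Theorem \ref{typeDiff} and the gentleness of type $A$ are \emph{not} automatically non-special-biserial, must be brought under the Lemma \ref{lemma1} argument or the counting argument rather than under a reduction. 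The analogous statement for $E_8$ then follows by the same scheme applied to the larger cylinder.
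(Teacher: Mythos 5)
Your overall architecture matches the paper's: split into cases according to the $\tau$-orbit of a summand $M$ of $T$, handle the orbit of $P_6$ by the reduction $M^{\perp}_{\mathcal C_{E_7}}\simeq\mathcal C_{E_6}$ via Proposition \ref{BOWprop} and Corollary \ref{BOWcor} together with the fact that special biseriality passes to $\Gamma/\Gamma e\Gamma$ (a step the paper leaves implicit and you correctly justify), and handle the other orbits by computing $\Ext$-hammocks and invoking Lemma \ref{lemma1} with $X$ taken in the branch orbit, where $\alpha_{\mathcal C}(X)=\alpha_{\mathcal C}(\tau X)=3$ is automatic. Your reformulation of hypothesis b) in terms of the support of $\Hom_{\mathcal C}(M,-)$ is equivalent, up to a $\tau$-shift, to the paper's criterion that two consecutive three-middle-term AR-triangles lie entirely outside the compatibility region of $M$.

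The genuine gap is in your treatment of the two boundary orbits of $P_1$ and $P_7$. You propose a counting argument whose conclusion is that no tilting object has \emph{all} its summands inside a single such orbit. Even granting the bound (which does hold: within either orbit the hammock forces far fewer than $7$ pairwise compatible objects), this only yields $T\not\subseteq O$ for each of the two orbits $O$ separately; it does not exclude a tilting object all of whose summands lie in the \emph{union} of the two orbits, meeting neither an interior orbit nor the orbit of $P_6$. In the $E_6$ proof this structure is sound because exactly one orbit is left to counting, so $T$ is forced into an orbit already covered by Lemma \ref{lemma1}; with two orbits left to counting the case analysis no longer covers every tilting object, only every orbit, which is the wrong coverage check. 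To close the gap you must either run a joint count over the union of the two orbits (the maximal pairwise compatible sets have sizes $4$ and $2$ respectively, so the union still falls short of $7$), or, as the paper does in figures \ref{E7_P1} and \ref{E7_P7}, observe that the hammocks of objects in these two orbits are thin enough that Lemma \ref{lemma1} applies to them directly, making the counting regime unnecessary for $E_7$ altogether.
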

\begin{proof}
We show for every $\tau$-orbit that if $T$ has a summand from this orbit, then $\Gamma$ is not special biserial. First we enumerate the vertices of $E_7$;
$$ \begin{tikzpicture}
\node (6) at (0,0) [yvertex]{$6$};
\node (5) at (1,0) [yvertex]{$5$};
\node (4) at (2,0) [yvertex]{$4$};
\node (3) at (3,0)[yvertex] {$3$};
\node (2) at (4,0)[yvertex] {$2$};
\node (1) at (5,0)[yvertex] {$1$};
\node (7) at (3,1) [yvertex] {$7$};
\draw [-] (6)--(5);
\draw [-] (5)--(4);
\draw [-] (4)--(3);
\draw [-] (3)--(2);
\draw [-] (2)--(1);
\draw [-] (3)--(7);
\end{tikzpicture} $$
Let $e_6$ be the idempotent corresponding to vertex $6$ and let $M=P_6 \left[i\right]$, $i\in\left\{0,\ldots,10\right\}$. By proposition \ref{BOWprop} we have $M^{\perp}_{\mathcal{C}_{E_7}}=\mathcal{C}_{E_6}$, and by Corollary \ref{BOWcor} it is clear that for any cluster-tilting object $T$ in $\mathcal{C}_{E_7}$ where $P_6 \left[i\right]$ is an indecomposable summand then $\Gamma=\End_{{\mathcal C}_{E_7}}(T)^{\op}$ has a factor algebra that is not special biserial. Thus for any tilting object $T$ with $M$ an indecomposable summand, the corresponding cluster-tilted algebra $\Gamma$ is not special biserial.

Assume that $T$ has a summand $M$ in the $\tau$-orbit of $P_5$. In figure \ref{E7_P5} the grey areas indicate which objects in $\mathcal{C}_{E_7}$ are compatible with $M$.
\begin{equation}\label{E7_P5}
\scalebox{.75}{ \begin{tikzpicture}[baseline=-1.7cm,scale=.5,yscale=-1]
 \fill [fill1] (21,0.5) -- (21.75,1) -- (21,1.5) -- (20.25,1) -- cycle;
 \fill [fill1] (10.5,2.5) -- (11.25,3) -- (10.5,3.5) -- (9.75,3) -- cycle;
 \fill [fill1] (1.5,2.5) -- (2.25,3) -- (1.5,3.5) -- (0.75,3) -- cycle;
 \fill [fill1] (29.75,6.5) -- (24.25,6.5) -- (27,4.5) -- cycle;
 \fill [fill1] (17.75,6.5) -- (12.25,6.5) -- (15,4.5) -- cycle;
 \fill [fill1] (29.75,6.5) -- (24.25,6.5) -- (27,4.5) -- cycle;
 \fill [fill1] (3.25,6.5) -- (8.75,6.5)-- (6,4.5)  -- cycle;
 \fill [fill1] (6,5.5) -- (13.5,0.5) -- (0,0.5) -- (0,1.5) --  cycle;
 \fill [fill1] (30,0.5) -- (30,1.5) -- (28.75,0.5) --  cycle;
 \foreach \x in {0,...,10}
  \foreach \y in {1,3,5}
   \node (\y-\x) at (\x*3,\y) [vertex] {};
 \foreach \x in {0,...,9}
  \foreach \y in {2,4,6}
   \node (\y-\x) at (\x*3+1.5,\y) [vertex] {};
 \foreach \x in {0,...,9}
  \foreach \y in {3}
   \node (a-\x) at (\x*3+1.5,\y) [vertex] {};
   
   \replacevertex{(5-2)}{[tvertex][fill1] {$M$}}
 \foreach \xa/\xb in {0/1,1/2,2/3,3/4,4/5,5/6,6/7,7/8,8/9,9/10}
  \foreach \ya/\yb in {1/2,3/2,3/4,5/4,5/6}
   {
    \draw [->] (\ya-\xa) -- (\yb-\xa);
    \draw [->] (\yb-\xa) -- (\ya-\xb);
   }
   \foreach \xa/\xb in {0/1,1/2,2/3,3/4,4/5,5/6,6/7,7/8,8/9,9/10}
    \foreach \ya/\yb in {3/3}
    {
     \draw [->] (\ya-\xa)--(a-\xa);
     \draw [->] (a-\xa)--(\yb-\xb);   
    }
 \draw [dashed] (0,0.4) -- (0,6.5); 
 \draw [dashed] (30,0.4) -- (30,6.5);
\end{tikzpicture} } 
\end{equation}
From figure \ref{E7_P5} it is clear that there are at least two AR-triangles with three middle terms such that all the indecomposable objects in the triangles are not grey and such that the end term of the first triangle is the starting term of the second triangle. Thus it follows by Lemma~\ref{lemma1} that $\Gamma=\End(T)^{\op}$ is not special biserial if $T$ has a summand from this $\tau$-orbit.

Furthermore, assume that $T$ has an indecomposable summand $M$ in the $\tau$-orbit of either $P_4, (P_3, P_2, P_1$ or $P_7$), then the grey areas in the figure ~\ref{E7_P4}, (~\ref{E7_P3},~\ref{E7_P2}, ~\ref{E7_P1} and ~\ref{E7_P7}, respectively) shows which objects in $\mathcal C$ are compatible with the respective summand $M$.
 By the same argument as above we see that in all the cases there are at least two AR-triangles with three middle terms such that all the indecomposable objects in the triangles are not grey and such that the end term of the first triangle is the starting term of the second triangle. It follows from Lemma~\ref{lemma1} that $\Gamma$ is not special biserial if $T$ has a summand from any of these $\tau$-orbits.
%
\begin{equation}\label{E7_P4}
 \scalebox{.75}{ \begin{tikzpicture}[baseline=-1.7cm,scale=.5,yscale=-1]
 \fill [fill1] (28.5,6.5) -- (29.25,6) -- (28.5,5.5) -- (27.75,6) -- cycle;
 \fill [fill1] (16.5,6.5) -- (17.25,6) -- (16.5,5.5) -- (15.75,6) -- cycle; 
 \fill [fill1] (4.5,2.5) -- (5.25,3) -- (4.5,3.5) -- (3.75,3) -- cycle;
 \fill [fill1] (10.5,2.5) -- (11.25,3) -- (10.5,3.5) -- (9.75,3) -- cycle;
 \fill [fill1] (11.75,6.5) -- (3.25,6.5) -- (7.5,3.5) -- cycle;
 \fill [fill1] (1.5,0.5) -- (13.5,0.5) -- (7.5,4.5) -- cycle;
 \foreach \x in {0,...,10}
  \foreach \y in {1,3,5}
   \node (\y-\x) at (\x*3,\y) [vertex] {};
 \foreach \x in {0,...,9}
  \foreach \y in {2,4,6}
   \node (\y-\x) at (\x*3+1.5,\y) [vertex] {};
 \foreach \x in {0,...,9}
  \foreach \y in {3}
   \node (a-\x) at (\x*3+1.5,\y) [vertex] {};
   
   \replacevertex{(4-2)}{[tvertex][fill1] {$M$}}
 \foreach \xa/\xb in {0/1,1/2,2/3,3/4,4/5,5/6,6/7,7/8,8/9,9/10}
  \foreach \ya/\yb in {1/2,3/2,3/4,5/4,5/6}
   {
    \draw [->] (\ya-\xa) -- (\yb-\xa);
    \draw [->] (\yb-\xa) -- (\ya-\xb);
   }
   \foreach \xa/\xb in {0/1,1/2,2/3,3/4,4/5,5/6,6/7,7/8,8/9,9/10}
    \foreach \ya/\yb in {3/3}
    {
     \draw [->] (\ya-\xa)--(a-\xa);
     \draw [->] (a-\xa)--(\yb-\xb);   
    }
 \draw [dashed] (0,0.4) -- (0,6.5); 
 \draw [dashed] (30,0.4) -- (30,6.5);
\end{tikzpicture} } 
\end{equation}
%
\begin{equation}\label{E7_P3}
\scalebox{.75}{ \begin{tikzpicture}[baseline=-1.7cm,scale=.5,yscale=-1]
 \fill [fill1] (7.5,2.5) -- (8.25,3) -- (7.5,3.5) -- (6.75,3) -- cycle;
 \fill [fill1] (10.5,2.5) -- (11.25,3) -- (10.5,3.5) -- (9.75,3) -- cycle;
 \fill [fill1] (3.25,6.5) -- (14.75,6.5) -- (9,2.5) -- cycle;
 \fill [fill1] (4.5,0.5) -- (13.5,0.5) -- (9,3.5) -- cycle;
 \foreach \x in {0,...,10}
  \foreach \y in {1,3,5}
   \node (\y-\x) at (\x*3,\y) [vertex] {};
 \foreach \x in {0,...,9}
  \foreach \y in {2,4,6}
   \node (\y-\x) at (\x*3+1.5,\y) [vertex] {};
 \foreach \x in {0,...,9}
  \foreach \y in {3}
   \node (a-\x) at (\x*3+1.5,\y) [vertex] {};
   
   \replacevertex{(3-3)}{[tvertex][fill1] {$M$}}
 \foreach \xa/\xb in {0/1,1/2,2/3,3/4,4/5,5/6,6/7,7/8,8/9,9/10}
  \foreach \ya/\yb in {1/2,3/2,3/4,5/4,5/6}
   {
    \draw [->] (\ya-\xa) -- (\yb-\xa);
    \draw [->] (\yb-\xa) -- (\ya-\xb);
   }
   \foreach \xa/\xb in {0/1,1/2,2/3,3/4,4/5,5/6,6/7,7/8,8/9,9/10}
    \foreach \ya/\yb in {3/3}
    {
     \draw [->] (\ya-\xa)--(a-\xa);
     \draw [->] (a-\xa)--(\yb-\xb);   
    }
 \draw [dashed] (0,0.4) -- (0,6.5); 
 \draw [dashed] (30,0.4) -- (30,6.5);
\end{tikzpicture} } 
\end{equation}
%
\begin{equation}\label{E7_P2}
 \scalebox{.75}{ \begin{tikzpicture}[baseline=-1.7cm,scale=.5,yscale=-1]
 \fill [fill1] (2.25,1) -- (3,0.5) -- (3.75,1) -- (3,1.5) -- cycle;
 \fill [fill1] (17.25,1) -- (18,0.5) -- (18.75,1) -- (18,1.5) -- cycle;
 \fill [fill1] (7.5,2.5) -- (8.25,3) -- (7.5,3.5) -- (6.75,3) -- cycle;
 \fill [fill1] (13.5,2.5) -- (14.25,3) -- (13.5,3.5) -- (12.75,3) -- cycle;
 \fill [fill1] (2.75,6.5) -- (18.25,6.5) -- (10.5,1.5) -- cycle;
 \fill [fill1] (7.75,0.5) -- (13.25,0.5) -- (10.5,2.5) -- cycle;
 \fill [fill1] (24.75,6) -- (25.5,5.5) -- (26.25,6) -- (25.5,6.5) -- cycle;
 \foreach \x in {0,...,10}
  \foreach \y in {1,3,5}
   \node (\y-\x) at (\x*3,\y) [vertex] {};
 \foreach \x in {0,...,9}
  \foreach \y in {2,4,6}
   \node (\y-\x) at (\x*3+1.5,\y) [vertex] {};
 \foreach \x in {0,...,9}
  \foreach \y in {3}
   \node (a-\x) at (\x*3+1.5,\y) [vertex] {};
   
   \replacevertex{(2-3)}{[tvertex][fill1] {$M$}}
 \foreach \xa/\xb in {0/1,1/2,2/3,3/4,4/5,5/6,6/7,7/8,8/9,9/10}
  \foreach \ya/\yb in {1/2,3/2,3/4,5/4,5/6}
   {
    \draw [->] (\ya-\xa) -- (\yb-\xa);
    \draw [->] (\yb-\xa) -- (\ya-\xb);
   }
   \foreach \xa/\xb in {0/1,1/2,2/3,3/4,4/5,5/6,6/7,7/8,8/9,9/10}
    \foreach \ya/\yb in {3/3}
    {
     \draw [->] (\ya-\xa)--(a-\xa);
     \draw [->] (a-\xa)--(\yb-\xb);   
    }
 \draw [dashed] (0,0.4) -- (0,6.5); 
 \draw [dashed] (30,0.4) -- (30,6.5);
\end{tikzpicture} } 
\end{equation}
%
\begin{equation}\label{E7_P1}
 \scalebox{.73}{ \begin{tikzpicture}[baseline=-1.7cm,scale=.5,yscale=-1]
 \fill[fill1] (26.25,1) -- (27,0.5) -- (27.75,1) -- (27,1.5) -- cycle;
 \fill [fill1] (1.5,2.5) -- (2.25,3) -- (1.5,3.5) -- (0.75,3) -- cycle;
 \fill [fill1] (7.5,2.5) -- (8.25,3) -- (7.5,3.5) -- (6.75,3) -- cycle;
 \fill [fill1] (16.5,2.5) -- (17.25,3) -- (16.5,3.5) -- (15.75,3) -- cycle;
 \fill [fill1] (22.5,2.5) -- (23.25,3) -- (22.5,3.5) -- (21.75,3) -- cycle;
 \fill [fill1] (3,6.5) -- (21,6.5) -- (12,0.5) -- cycle;
  \fill[fill1] (1.5,0.5) -- (7.5,0.5) -- (4.5,2.5) -- cycle;
  \fill[fill1] (16.5,0.5) -- (22.5,0.5) -- (19.5,2.5) -- cycle;
  \fill[fill1] (24,6.5) -- (27,4.5) -- (30,6.5) -- cycle;
 \foreach \x in {0,...,10}
  \foreach \y in {1,3,5}
   \node (\y-\x) at (\x*3,\y) [vertex] {};
 \foreach \x in {0,...,9}
  \foreach \y in {2,4,6}
   \node (\y-\x) at (\x*3+1.5,\y) [vertex] {};
 \foreach \x in {0,...,9}
  \foreach \y in {3}
   \node (a-\x) at (\x*3+1.5,\y) [vertex] {};
   
   \replacevertex{(1-4)}{[tvertex][fill1] {$M$}}
 \foreach \xa/\xb in {0/1,1/2,2/3,3/4,4/5,5/6,6/7,7/8,8/9,9/10}
  \foreach \ya/\yb in {1/2,3/2,3/4,5/4,5/6}
   {
    \draw [->] (\ya-\xa) -- (\yb-\xa);
    \draw [->] (\yb-\xa) -- (\ya-\xb);
   }
   \foreach \xa/\xb in {0/1,1/2,2/3,3/4,4/5,5/6,6/7,7/8,8/9,9/10}
    \foreach \ya/\yb in {3/3}
    {
     \draw [->] (\ya-\xa)--(a-\xa);
     \draw [->] (a-\xa)--(\yb-\xb);   
    }
 \draw [dashed] (0,0.4) -- (0,6.5); 
 \draw [dashed] (30,0.4) -- (30,6.5);
\end{tikzpicture} }
\end{equation}
%
\begin{equation}\label{E7_P7}
 \scalebox{.73}{ \begin{tikzpicture}[baseline=-1.7cm,scale=.5,yscale=-1]
 \fill [fill1] (0,0.5) -- (0,1.5) -- (0.75,1) -- cycle;
 \fill [fill1] (30,0.5) -- (30,1.5) -- (29.25,1) -- cycle;
 \fill [fill1] (4.5,2.5) -- (5.25,3) -- (4.5,3.5) -- (3.75,3) -- cycle;
 \fill [fill1] (16.5,2.5) -- (17.25,3) -- (16.5,3.5) -- (15.75,3) -- cycle;
 \fill [fill1] (21,0.5) -- (21.75,1) -- (21,1.5) -- (20.25,1) -- cycle;
 \fill [fill1] (3.25,6.5) -- (17.75,6.5) -- (10.5,1.5) -- cycle;
 \fill [fill1] (4.75,0.5) -- (16.25,0.5) -- (10.5,4.5) -- cycle;
 \fill[fill1] (21.75,6) -- (22.5,5.5) -- (23.25,6) -- (22.5,6.5) -- cycle;
 \fill[fill1] (27.75,6) -- (28.5,5.5) -- (29.25,6) -- (28.5,6.5) -- cycle;
 \foreach \x in {0,...,10}
  \foreach \y in {1,3,5}
   \node (\y-\x) at (\x*3,\y) [vertex] {};
 \foreach \x in {0,...,9}
  \foreach \y in {2,4,6}
   \node (\y-\x) at (\x*3+1.5,\y) [vertex] {};
 \foreach \x in {0,...,9}
  \foreach \y in {3}
   \node (a-\x) at (\x*3+1.5,\y) [vertex] {};
   
   \replacevertex{(a-3)}{[tvertex][fill1] {$M$}}
 \foreach \xa/\xb in {0/1,1/2,2/3,3/4,4/5,5/6,6/7,7/8,8/9,9/10}
  \foreach \ya/\yb in {1/2,3/2,3/4,5/4,5/6}
   {
    \draw [->] (\ya-\xa) -- (\yb-\xa);
    \draw [->] (\yb-\xa) -- (\ya-\xb);
   }
   \foreach \xa/\xb in {0/1,1/2,2/3,3/4,4/5,5/6,6/7,7/8,8/9,9/10}
    \foreach \ya/\yb in {3/3}
    {
     \draw [->] (\ya-\xa)--(a-\xa);
     \draw [->] (a-\xa)--(\yb-\xb);   
    }
 \draw [dashed] (0,0.4) -- (0,6.5); 
 \draw [dashed] (30,0.4) -- (30,6.5);
\end{tikzpicture} } 
\end{equation}
\end{proof}
%
The AR-quiver of the cluster category of $E_8$ is a cylinder and has the following shape:
\[ \scalebox{.8}{ \begin{tikzpicture}[scale=.5,yscale=-1]
 \foreach \x in {0,...,16}
  \foreach \y in {1,3,5,7}
   \node (\y-\x) at (\x*2,\y) [vertex] {};
 \foreach \x in {0,...,15}
  \foreach \y in {2,4,6}
   \node (\y-\x) at (\x*2+1,\y) [vertex] {};
 \foreach \x in {0,...,15}
  \foreach \y in {3}
   \node (a-\x) at (\x*2+1,\y) [vertex] {};  
 \foreach \xa/\xb in {0/1,1/2,2/3,3/4,4/5,5/6,6/7,7/8,8/9,9/10,10/11,11/12,12/13,13/14,14/15,15/16}
  \foreach \ya/\yb in {1/2,3/2,3/4,5/4,5/6,7/6}
   {
    \draw [->] (\ya-\xa) -- (\yb-\xa);
    \draw [->] (\yb-\xa) -- (\ya-\xb);
   }
 \foreach \xa/\xb in {0/1,1/2,2/3,3/4,4/5,5/6,6/7,7/8,8/9,9/10,10/11,11/12,12/13,13/14,14/15,15/16}
    \foreach \ya/\yb in {3/3}
   {
     \draw [->] (\ya-\xa)--(a-\xa);
     \draw [->] (a-\xa) --(\ya-\xb);
   }     
 \draw [dashed] (0,0.4) -- (0,7.6); 
 \draw [dashed] (32,0.4) -- (32,7.6);
\end{tikzpicture} } \]
\begin{proposition}\label{E8prop}
Let $T$ be a tilting object in the cluster category of $E_8$ and let $\Gamma=\End_{\mathcal C}(T)^{\op}$, then $\Gamma$ is not special biserial.
\end{proposition}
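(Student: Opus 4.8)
The plan is to follow the same strategy as in the proof of Proposition~\ref{E7prop}, going through the $\tau$-orbits of the AR-quiver of $\mathcal{C}_{E_8}$ one at a time, and for each showing that if $T$ has an indecomposable summand $M$ from that orbit then $\Gamma = \End_{\mathcal C}(T)^{\op}$ is not special biserial. The AR-quiver of $\mathcal{C}_{E_8}$ is a cylinder with $8$ rows, so there are $8$ $\tau$-orbits to treat. Since $E_8$ has no non-trivial symmetry, we do not get the reductions-by-symmetry that sometimes shorten such arguments; each orbit must be examined on its own, but the tools are uniform.

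First I would enumerate the vertices of $E_8$ and handle the orbits containing a shift of an indecomposable projective $P_i$ that is a ``leaf'' whose removal gives $E_7$: concretely, taking $M = P_i[j]$ for the appropriate end vertex $i$ and any $j$, Proposition~\ref{BOWprop} gives $M^{\perp}_{\mathcal{C}_{E_8}} = \mathcal{C}_{E_7}$, so by Corollary~\ref{BOWcor} any cluster-tilting object $T$ of $\mathcal{C}_{E_8}$ with summand $M$ corresponds to a cluster-tilting object of $\mathcal{C}_{E_7}$; then $\End_{\mathcal{C}_{E_8}}(T)^{\op}$ has the cluster-tilted algebra $\End_{\mathcal{C}_{E_7}}(T\setminus M)^{\op}$ as a factor algebra, which is not special biserial by Proposition~\ref{E7prop}, and hence $\Gamma$ is not special biserial. (One should also dispose of the very outermost $\tau$-orbit by the counting argument used for $E_6$: the compatibility constraints imposed by a summand $M$ in that orbit are so restrictive that no complete tilting object with all summands in that orbit exists --- but since this case is in fact also covered by the projective-shift reduction for a suitable choice of end vertex, it may not need a separate treatment.)

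For the remaining $\tau$-orbits --- those not reachable from an $E_7$-reduction --- I would, for a chosen representative $M$ in each orbit, draw the $\Ext$-support (equivalently, the set of indecomposables compatible with $M$) inside the AR-quiver, exactly as in figures~\ref{E7_P5}--\ref{E7_P7}. The key observation is then that the complement of this support contains two consecutive AR-triangles with three middle terms, i.e. an indecomposable $X$ with $\alpha_{\mathcal C}(X) = \alpha_{\mathcal C}(\tau X) = 3$ and with all the relevant objects $X, \tau X, \tau^2 X$ and the middle terms of the two triangles lying outside the $\Ext$-support of $M$. Since $M$ is a summand of $T$ and those objects are not compatible with $M$, it follows that $\Hom_{\mathcal C}(T, Y) \neq 0$ (equivalently $\Ext^1_{\mathcal C}(T,Y)=0$ fails) for each such $Y$ --- more precisely, incompatibility with $M$ forces $\Hom_{\mathcal C}(T,Y)\neq 0$ in the sense required by hypothesis~b) of Lemma~\ref{lemma1} --- so Lemma~\ref{lemma1} applies and $\Gamma$ is not special biserial.

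The main obstacle is purely combinatorial bookkeeping: the $E_8$ AR-quiver is large ($8$ rows, period related to the Coxeter number), so one must verify for each of the several remaining $\tau$-orbits that the $\Ext$-support of a representative $M$ genuinely omits a ``double triangle'' configuration, and that the omitted objects satisfy hypothesis~b) of Lemma~\ref{lemma1}. This is best done by the same figure-based argument as in the $E_7$ case, and I would present it as a sequence of labelled figures (one per orbit) each accompanied by the one-line remark that Lemma~\ref{lemma1} applies. There is no conceptual difficulty beyond what is already in Propositions~\ref{E6prop} and~\ref{E7prop}; the only care needed is to make sure every $\tau$-orbit is covered exactly once, either by the $E_7$-reduction or by a Lemma~\ref{lemma1} figure.
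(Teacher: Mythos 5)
Your proposal is correct and follows essentially the same strategy as the paper: dispose of the orbit of the shifts of the long-arm leaf projective via the reduction $M^{\perp}_{\mathcal{C}_{E_8}}=\mathcal{C}_{E_7}$ (Proposition~\ref{BOWprop}, Corollary~\ref{BOWcor} and Proposition~\ref{E7prop}), and treat the remaining orbits by locating, outside the set of objects compatible with a representative $M$, two consecutive AR-triangles with three middle terms and invoking Lemma~\ref{lemma1}. The only difference is that the paper also handles the orbit of $P_6\left[i\right]$ by reduction, namely to $\mathcal{C}_{E_6}\times\mathcal{C}_{A_1}$ combined with Proposition~\ref{E6prop}, rather than by a figure.
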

\begin{proof}
We show for every $\tau$-orbit in the AR-quiver of $\mathcal C$ that if $T$ has a summand from this orbit, then $\Gamma$ is not special biserial. First we enumerate the vertices of $E_8$;
$$ \begin{tikzpicture}
\node (7) at (-1,0)[yvertex]{$7$};
\node (6) at (0,0) [yvertex]{$6$};
\node (5) at (1,0) [yvertex]{$5$};
\node (4) at (2,0) [yvertex]{$4$};
\node (3) at (3,0)[yvertex] {$3$};
\node (2) at (4,0)[yvertex] {$2$};
\node (1) at (5,0)[yvertex] {$1$};
\node (8) at (3,1) [yvertex] {$8$};
\draw [-] (7)--(6);
\draw [-] (6)--(5);
\draw [-] (5)--(4);
\draw [-] (4)--(3);
\draw [-] (3)--(2);
\draw [-] (2)--(1);
\draw [-] (3)--(8);
\end{tikzpicture} $$
Let $e_7$ be the idempotent corresponding to vertex $7$ and $M=P_7 \left[i\right]$, $i\in\left\{0,\ldots,16\right\}$. By proposition~\ref{BOWprop} it is clear that $M^{\perp}_{\mathcal{C}_{E_8}}=\mathcal{C}_{E_7}$, and thus if $M$ is a summand of $T$ then $\Gamma$ has a factor algebra which is not special biserial by Proposition ~\ref{E7prop}. So $\Gamma$ is not special algebra. Further let $e_6$ be the idempotent corresponding to vertex $6$ and 
$M=P_6 \left[i\right]$, $i\in\left\{0,\ldots,16\right\}$. Then by proposition ~\ref{BOWprop}, $M^{\perp}_{\mathcal{C}_{E_8}}=\mathcal{C}_{E_6}\times \mathcal{C}_{A_1}$ and hence if $T$ has a summand from this $\tau$-orbit then by ~\ref{E6prop} and the same argument as above $\Gamma$ is not special biserial.

Furthermore, assume that $T$ has an indecomposable summand $M$ in the $\tau$-orbit of either $P_5,( P_4, P_3, P_2, P_1$ or $P_8$), then the grey areas in the figure ~\ref{E8_P5}, (~\ref{E8_P4}, ~\ref{E8_P3},~\ref{E8_P2}, ~\ref{E8_P1} and ~\ref{E8_P8}, respectively) shows which objects in $\mathcal C$ are compatible with the respective summand $M$. From these figures we see that in all the cases there are at least two AR-triangles with three middle terms such that all the indecomposable objects in the triangles are not grey and such that the end term of the first triangle is the starting term of the second triangle. It follows from Lemma~\ref{lemma1} that $\Gamma$ is not special biserial if $T$ has a summand from any of these $\tau$-orbits.
\begin{equation}
 \scalebox{.7}{ \begin{tikzpicture}[baseline=-1.7cm,scale=.49,yscale=-1]\label{E8_P5}
 \fill[fill1] (3,7.5)--(5,5.5)--(7,7.5)--cycle;
 \fill[fill1] (9,7.5)--(12,4.5)--(15,7.5)--cycle;
 \fill[fill1] (7,0.5)--(12,5.5)--(17,0.5)--cycle;
 \fill[fill1] (17,7.5)--(19,5.5)--(21,7.5)--cycle;
  \fill[fill1] (27.5,7)--(28,6.5)--(28.5,7)--(28,7.5)--cycle;
  \fill[fill1] (8.5,3)--(9,2.5)--(9.5,3)--(9,3.5)--cycle;
  \fill[fill1] (14.5,3)--(15,2.5)--(15.5,3)--(15,3.5)--cycle;
 \foreach \x in {0,...,16}
  \foreach \y in {1,3,5,7}
   \node (\y-\x) at (\x*2,\y) [vertex] {};
 \foreach \x in {0,...,15}
  \foreach \y in {2,4,6}
   \node (\y-\x) at (\x*2+1,\y) [vertex] {};
 \foreach \x in {0,...,15}
  \foreach \y in {3}
   \node (a-\x) at (\x*2+1,\y) [vertex] {};
   
   \replacevertex{(5-6)}{[tvertex][fill1] {$M$}}
 \foreach \xa/\xb in {0/1,1/2,2/3,3/4,4/5,5/6,6/7,7/8,8/9,9/10,10/11,11/12,12/13,13/14,14/15,15/16}
  \foreach \ya/\yb in {1/2,3/2,3/4,5/4,5/6,7/6}
   {
    \draw [->] (\ya-\xa) -- (\yb-\xa);
    \draw [->] (\yb-\xa) -- (\ya-\xb);
   }
 \foreach \xa/\xb in {0/1,1/2,2/3,3/4,4/5,5/6,6/7,7/8,8/9,9/10,10/11,11/12,12/13,13/14,14/15,15/16}
    \foreach \ya/\yb in {3/3}
   {
     \draw [->] (\ya-\xa)--(a-\xa);
     \draw [->] (a-\xa) --(\ya-\xb);
   }     
 \draw [dashed] (0,0.4) -- (0,7.6); 
 \draw [dashed] (32,0.4) -- (32,7.6);
\end{tikzpicture} } 
\end{equation}
%
\begin{equation}
 \scalebox{.7}{ \begin{tikzpicture}[baseline=-1.7cm,scale=.49,yscale=-1]\label{E8_P4}
 \fill[fill1] (7,0.5)--(11,4.5)--(15,0.5)--cycle;
 \fill[fill1] (7,7.5)--(11,3.5)--(15,7.5)--cycle;
 \fill[fill1] (3.5,7)--(4,6.5)--(4.5,7)--(4,7.5)--cycle;
 \fill[fill1] (8.5,3)--(9,2.5)--(9.5,3)--(9,3.5)--cycle;
 \fill[fill1] (12.5,3)--(13,2.5)--(13.5,3)--(13,3.5)--cycle;
 \fill[fill1] (17.5,7)--(18,6.5)--(18.5,7)--(18,7.5)--cycle;
 \foreach \x in {0,...,16}
  \foreach \y in {1,3,5,7}
   \node (\y-\x) at (\x*2,\y) [vertex] {};
 \foreach \x in {0,...,15}
  \foreach \y in {2,4,6}
   \node (\y-\x) at (\x*2+1,\y) [vertex] {};
 \foreach \x in {0,...,15}
  \foreach \y in {3}
   \node (a-\x) at (\x*2+1,\y) [vertex] {}; 
   
   \replacevertex{(4-5)}{[tvertex][fill1] {$M$}}
 \foreach \xa/\xb in {0/1,1/2,2/3,3/4,4/5,5/6,6/7,7/8,8/9,9/10,10/11,11/12,12/13,13/14,14/15,15/16}
  \foreach \ya/\yb in {1/2,3/2,3/4,5/4,5/6,7/6}
   {
    \draw [->] (\ya-\xa) -- (\yb-\xa);
    \draw [->] (\yb-\xa) -- (\ya-\xb);
   }
 \foreach \xa/\xb in {0/1,1/2,2/3,3/4,4/5,5/6,6/7,7/8,8/9,9/10,10/11,11/12,12/13,13/14,14/15,15/16}
    \foreach \ya/\yb in {3/3}
   {
     \draw [->] (\ya-\xa)--(a-\xa);
     \draw [->] (a-\xa) --(\ya-\xb);
   }     
 \draw [dashed] (0,0.4) -- (0,7.6); 
 \draw [dashed] (32,0.4) -- (32,7.6);
\end{tikzpicture} } 
\end{equation}
\begin{equation}
 \scalebox{.7}{ \begin{tikzpicture}[baseline=-1.7cm,scale=.49,yscale=-1]\label{E8_P3}
 \fill[fill1] (1,0.5)--(4,3.5)--(7,0.5)--cycle;
 \fill[fill1] (0,7.5)--(0,6.5)--(4,2.5)--(9,7.5)--cycle;
 \fill[fill1] (31.5,7)--(32,6.5)--(32,7.5)--cycle;
 \fill[fill1] (2.5,3)--(3,2.5)--(3.5,3)--(3,3.5)--cycle;
 \fill[fill1] (4.5,3)--(5,2.5)--(5.5,3)--(5,3.5)--cycle; 
 \foreach \x in {0,...,16}
  \foreach \y in {1,3,5,7}
   \node (\y-\x) at (\x*2,\y) [vertex] {};
 \foreach \x in {0,...,15}
  \foreach \y in {2,4,6}
   \node (\y-\x) at (\x*2+1,\y) [vertex] {};
 \foreach \x in {0,...,15}
  \foreach \y in {3}
   \node (a-\x) at (\x*2+1,\y) [vertex] {};
   
   \replacevertex{(3-2)}{[tvertex][fill1] {$M$}} 
 \foreach \xa/\xb in {0/1,1/2,2/3,3/4,4/5,5/6,6/7,7/8,8/9,9/10,10/11,11/12,12/13,13/14,14/15,15/16}
  \foreach \ya/\yb in {1/2,3/2,3/4,5/4,5/6,7/6}
   {
    \draw [->] (\ya-\xa) -- (\yb-\xa);
    \draw [->] (\yb-\xa) -- (\ya-\xb);
   }
 \foreach \xa/\xb in {0/1,1/2,2/3,3/4,4/5,5/6,6/7,7/8,8/9,9/10,10/11,11/12,12/13,13/14,14/15,15/16}
    \foreach \ya/\yb in {3/3}
   {
     \draw [->] (\ya-\xa)--(a-\xa);
     \draw [->] (a-\xa) --(\ya-\xb);
   }    
 \draw [dashed] (0,0.4) -- (0,7.6); 
 \draw [dashed] (32,0.4) -- (32,7.6);
\end{tikzpicture} } 
\end{equation}
%
\begin{equation}
 \scalebox{.7}{ \begin{tikzpicture}[baseline=-1.7cm,scale=.49,yscale=-1]\label{E8_P2}
 \fill[fill1] (3,0.5)--(5,2.5)--(7,0.5)--cycle;
 \fill[fill1] (0,7.5)--(0,6.5)--(5,1.5)--(11,7.5)--cycle;
 \fill[fill1] (0,0.5)--(0.5,1)--(0,1.5)--cycle;
 \fill[fill1] (31.5,1)--(32,0.5)--(32,1.5)--cycle;
 \fill[fill1] (31.5,7)--(32,6.5)--(32,7.5)--cycle;
 \fill[fill1] (2.5,3)--(3,2.5)--(3.5,3)--(3,3.5)--cycle;
 \fill[fill1] (6.5,3)--(7,2.5)--(7.5,3)--(7,3.5)--cycle;
 \fill[fill1] (15.5,7)--(16,6.5)--(16.5,7)--(16,7.5)--cycle;
 \fill[fill1] (25.5,7)--(26,6.5)--(26.5,7)--(26,7.5)--cycle;
 \fill[fill1] (9.5,1)--(10,0.5)--(10.5,1)--(10,1.5)--cycle;
 \foreach \x in {0,...,16}
  \foreach \y in {1,3,5,7}
   \node (\y-\x) at (\x*2,\y) [vertex] {};
 \foreach \x in {0,...,15}
  \foreach \y in {2,4,6}
   \node (\y-\x) at (\x*2+1,\y) [vertex] {};
 \foreach \x in {0,...,15}
  \foreach \y in {3}
   \node (a-\x) at (\x*2+1,\y) [vertex] {};
   
   \replacevertex{(2-2)}{[tvertex][fill1] {$M$}} 
 \foreach \xa/\xb in {0/1,1/2,2/3,3/4,4/5,5/6,6/7,7/8,8/9,9/10,10/11,11/12,12/13,13/14,14/15,15/16}
  \foreach \ya/\yb in {1/2,3/2,3/4,5/4,5/6,7/6}
   {
    \draw [->] (\ya-\xa) -- (\yb-\xa);
    \draw [->] (\yb-\xa) -- (\ya-\xb);
   }
 \foreach \xa/\xb in {0/1,1/2,2/3,3/4,4/5,5/6,6/7,7/8,8/9,9/10,10/11,11/12,12/13,13/14,14/15,15/16}
    \foreach \ya/\yb in {3/3}
   {
     \draw [->] (\ya-\xa)--(a-\xa);
     \draw [->] (a-\xa) --(\ya-\xb);
   }     
 \draw [dashed] (0,0.4) -- (0,7.6); 
 \draw [dashed] (32,0.4) -- (32,7.6);
\end{tikzpicture} } 
\end{equation}
%
\begin{equation}
 \scalebox{.7}{ \begin{tikzpicture}[baseline=-1.7cm,scale=.49,yscale=-1]\label{E8_P1}
 \fill[fill1] (0,7.5)--(0,6.5)--(6,0.5)--(13,7.5)--cycle;
 \fill[fill1] (9,0.5)--(11,2.5)--(13,0.5)--cycle;
 \fill[fill1] (0,0.5)--(0,1.5)--(1,2.5)--(3,0.5)--cycle;
 \fill[fill1] (31.5,1)--(32,0.5)--(32,1.5)--cycle;
 \fill[fill1] (31.5,7)--(32,6.5)--(32,7.5)--cycle;
 \fill[fill1] (2.5,3)--(3,2.5)--(3.5,3)--(3,3.5)--cycle;
 \fill[fill1] (12.5,3)--(13,2.5)--(13.5,3)--(13,3.5)--cycle;
 \fill[fill1] (30.5,3)--(31,2.5)--(31.5,3)--(31,3.5)--cycle;
 \fill[fill1] (8.5,3)--(9,2.5)--(9.5,3)--(9,3.5)--cycle;  
 \fill[fill1] (15,7.5)--(17,5.5)--(19,7.5)--cycle;
 \fill[fill1] (25,7.5)--(27,5.5)--(29,7.5)--cycle;
 \fill[fill1] (15.5,1)--(16,0.5)--(16.5,1)--(16,1.5)--cycle;
 \fill[fill1] (21.5,1)--(22,0.5)--(22.5,1)--(22,1.5)--cycle;
 \fill[fill1] (27.5,1)--(28,0.5)--(28.5,1)--(28,1.5)--cycle;
 \fill[fill1] (21.5,7)--(22,6.5)--(22.5,7)--(22,7.5)--cycle;
 \foreach \x in {0,...,16}
  \foreach \y in {1,3,5,7}
   \node (\y-\x) at (\x*2,\y) [vertex] {};
 \foreach \x in {0,...,15}
  \foreach \y in {2,4,6}
   \node (\y-\x) at (\x*2+1,\y) [vertex] {};
 \foreach \x in {0,...,15}
  \foreach \y in {3}
   \node (a-\x) at (\x*2+1,\y) [vertex] {}; 
   \replacevertex{(1-3)}{[tvertex][fill1] {$M$}} 
 \foreach \xa/\xb in {0/1,1/2,2/3,3/4,4/5,5/6,6/7,7/8,8/9,9/10,10/11,11/12,12/13,13/14,14/15,15/16}
  \foreach \ya/\yb in {1/2,3/2,3/4,5/4,5/6,7/6}
   {
    \draw [->] (\ya-\xa) -- (\yb-\xa);
    \draw [->] (\yb-\xa) -- (\ya-\xb);
   }
 \foreach \xa/\xb in {0/1,1/2,2/3,3/4,4/5,5/6,6/7,7/8,8/9,9/10,10/11,11/12,12/13,13/14,14/15,15/16}
    \foreach \ya/\yb in {3/3}
   {
     \draw [->] (\ya-\xa)--(a-\xa);
     \draw [->] (a-\xa) --(\ya-\xb);
   }      
 \draw [dashed] (0,0.4) -- (0,7.6); 
 \draw [dashed] (32,0.4) -- (32,7.6);
\end{tikzpicture} } 
\end{equation}
\begin{equation}
 \scalebox{.7}{ \begin{tikzpicture}[baseline=-1.7cm,scale=.49,yscale=-1]\label{E8_P8}
 \fill[fill1] (3,0.5)--(7,4.5)--(11,0.5)--cycle;
 \fill[fill1] (1,7.5)--(7,1.5)--(13,7.5)--cycle;
 \fill[fill1] (0,0.5)--(0.5,1)--(0,1.5)--cycle;
 \fill[fill1] (31.5,1)--(32,0.5)--(32,1.5)--cycle;
 \fill[fill1] (2.5,3)--(3,2.5)--(3.5,3)--(3,3.5)--cycle;
 \fill[fill1] (10.5,3)--(11,2.5)--(11.5,3)--(11,3.5)--cycle;
 \fill[fill1] (15.5,7)--(16,6.5)--(16.5,7)--(16,7.5)--cycle;
 \fill[fill1] (19.5,7)--(20,6.5)--(20.5,7)--(20,7.5)--cycle;
 \fill[fill1] (25.5,7)--(26,6.5)--(26.5,7)--(26,7.5)--cycle;
 \fill[fill1] (29.5,7)--(30,6.5)--(30.5,7)--(30,7.5)--cycle;
 \fill[fill1] (13.5,1)--(14,0.5)--(14.5,1)--(14,1.5)--cycle;
 \foreach \x in {0,...,16}
  \foreach \y in {1,3,5,7}
   \node (\y-\x) at (\x*2,\y) [vertex] {};
 \foreach \x in {0,...,15}
  \foreach \y in {2,4,6}
   \node (\y-\x) at (\x*2+1,\y) [vertex] {};
 \foreach \x in {0,...,15}
  \foreach \y in {3}
   \node (a-\x) at (\x*2+1,\y) [vertex] {};
   
   \replacevertex{(a-3)}{[tvertex][fill1] {$M$}} 
 \foreach \xa/\xb in {0/1,1/2,2/3,3/4,4/5,5/6,6/7,7/8,8/9,9/10,10/11,11/12,12/13,13/14,14/15,15/16}
  \foreach \ya/\yb in {1/2,3/2,3/4,5/4,5/6,7/6}
   {
    \draw [->] (\ya-\xa) -- (\yb-\xa);
    \draw [->] (\yb-\xa) -- (\ya-\xb);
   }
 \foreach \xa/\xb in {0/1,1/2,2/3,3/4,4/5,5/6,6/7,7/8,8/9,9/10,10/11,11/12,12/13,13/14,14/15,15/16}
    \foreach \ya/\yb in {3/3}
   {
     \draw [->] (\ya-\xa)--(a-\xa);
     \draw [->] (a-\xa) --(\ya-\xb);
   }      
 \draw [dashed] (0,0.4) -- (0,7.6); 
 \draw [dashed] (32,0.4) -- (32,7.6);
\end{tikzpicture} } 
\end{equation}\\
\end{proof}

\section{The Euclidean Case }\label{seksjonTilde}
In this section we consider cluster-tilted algebras of Euclidean type. It was shown in ~\cite{ABGP} that cluster-tilted algebras of type $\widetilde{A}$ are gentle, and hence special biserial. In the following proposition we consider the cluster-tilted algebras of type $\widetilde{D}_n$,$\widetilde{E}_6$,$\widetilde{E}_7$ and $\widetilde{E}_8$.
\begin{proposition}\label{tildeprop}
Let $T$ be a tilting object in $\mathcal{C}_{\widetilde{D}_n}$,$\mathcal{C}_{\widetilde{E}_6}$,$\mathcal{C}_{\widetilde{E}_7}$ or $\mathcal{C}_{\widetilde{E}_8}$. Then $\Gamma=\End(T)^{\op}$ is not special biserial.
\end{proposition}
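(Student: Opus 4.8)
The plan is to deduce the result from Lemma~\ref{lemma1} (together with the generalization to AR-triangles with more than three middle terms mentioned after its proof) applied to an object in the transjective component of the relevant cluster category. Recall that for a tame hereditary algebra $H$ of type $\widetilde{\Delta}$ the transjective (preprojective/preinjective) component of $\mathcal{C}_H$ has the shape $\mathbb{Z}\widetilde{\Delta}$; in particular for every vertex $v$ of $\widetilde{\Delta}$ the $\tau$-orbit $[v]$ in this component is infinite, and each indecomposable $X\in[v]$ satisfies $\alpha_{\mathcal{C}}(X)=d_v$, where $d_v$ is the valency of $v$ in $\widetilde{\Delta}$ and the AR-triangle ending in $X$ has one middle-term summand for each edge incident to $v$. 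I would begin by fixing in each diagram a vertex $v$ of valency at least three: for $\widetilde{D}_n$ with $n\geq 5$, and for $\widetilde{E}_6,\widetilde{E}_7,\widetilde{E}_8$, take $v$ to be a branch (fork) vertex, which is trivalent; for $\widetilde{D}_4$ take $v$ the central vertex, of valency four.

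Let $X_0\in[v]$, and for $k\in\mathbb{Z}$ set $X=\tau^k X_0$. Since $\tau X$ lies in the same orbit $[v]$, we get $\alpha_{\mathcal{C}}(X)=\alpha_{\mathcal{C}}(\tau X)=d_v\geq 3$, so assumption a) of Lemma~\ref{lemma1} (in its original form when $d_v=3$, in its generalized form when $d_v=4$) holds for every such $X$. For assumption b), note that by Theorem~\ref{equivalence} the functor $F=\Hom_{\mathcal{C}}(T,-)$ induces an equivalence $\mathcal{C}/\add(\tau T)\to\mod\Gamma$, so for an indecomposable $Y$ one has $\Hom_{\mathcal{C}}(T,Y)=0$ if and only if $Y\in\add(\tau T)$. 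Now $\tau T$ has only finitely many indecomposable summands (namely $n+1$ for $\widetilde{D}_n$, and $7,8,9$ for $\widetilde{E}_6,\widetilde{E}_7,\widetilde{E}_8$), and the set $\mathcal{S}(X)=\{X,\tau X,\tau^2 X\}\cup\{\,\text{summands of the AR-triangles ending in }X\text{ and in }\tau X\,\}$ is a bounded neighbourhood of $X$ inside the transjective component that translates along the infinite orbit $[v]$ as $k$ varies. Hence only finitely many values of $k$ can yield $\mathcal{S}(\tau^k X_0)\cap\add(\tau T)\neq\emptyset$; choosing $k$ outside this finite set and putting $X=\tau^k X_0$ makes assumption b) hold as well.

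With both assumptions verified, Lemma~\ref{lemma1} (resp.\ its generalization) gives $\beta(\Gamma)>2$, and therefore $\Gamma=\End_{\mathcal{C}}(T)^{\op}$ is not special biserial by Theorem~\ref{WW}. Observe that this argument makes no use of where the summands of $T$ actually sit; it only needs that $\add(\tau T)$ is a finite family, so that a small neighbourhood of a distinguished high-valency orbit can be pushed off it.

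The main obstacle is precisely the verification of assumption b): guaranteeing that the finitely many summands of $\tau T$ can be dodged. This rests on two structural facts that should be stated carefully — that the transjective component is exactly $\mathbb{Z}\widetilde{\Delta}$, so the high-valency orbit is genuinely infinite and the mesh ending at $X$ does not degenerate (giving $\alpha_{\mathcal{C}}(X)=d_v$ on the nose), and that a cluster-tilting object, hence $\tau T$, has only finitely many indecomposable summands. It is worth remarking that summands of $\tau T$ lying in tubes are automatically disjoint from $\mathcal{S}(X)$, so in fact only the transjective summands of $\tau T$ need to be avoided. As an alternative route for $\widetilde{E}_6,\widetilde{E}_7,\widetilde{E}_8$ one could invoke Proposition~\ref{BOWprop} and Corollary~\ref{BOWcor} to reduce a tilting object having a shifted-projective summand at a suitable leaf to the Dynkin cases $E_6,E_7,E_8$ already handled, but since not every tilting object has such a summand, the uniform argument above is the cleaner one.
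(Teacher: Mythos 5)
Your proof is correct and follows the same overall strategy as the paper: locate a $\tau$-orbit of valency at least three in the transjective component $\mathbb{Z}\widetilde{\Delta}$, translate along it to escape the finitely many indecomposables killed by $\Hom_{\mathcal C}(T,-)$, and invoke Lemma~\ref{lemma1}. The one step where you diverge is the finiteness argument for hypothesis~b): the paper picks a summand $M$ of $T$ in the transjective component (which exists by \cite{BMRRT}) and uses \cite[Lemma 3.3]{BI} together with Serre duality to conclude that only finitely many indecomposables $Z$ satisfy $\Hom_{\mathcal C}(M,Z)=0$; you instead read off from the equivalence $\mathcal{C}/\add(\tau T)\to\mod\Gamma$ of Theorem~\ref{equivalence} that the indecomposables annihilated by $F$ are exactly the finitely many summands of $\tau T$. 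Your version is self-contained (no appeal to \cite{BI}, and no need for $T$ to have a transjective summand), at the cost of relying on the full strength of Theorem~\ref{equivalence} being an equivalence rather than only the easy rigidity direction used in the proof of Lemma~\ref{lemma1}. You also handle a point the paper glosses over: for $\widetilde{D}_4$ the transjective component has no orbit of valency exactly three, so the generalization of Lemma~\ref{lemma1} to four middle terms, mentioned but not formalized after its proof, is genuinely required; you invoke it explicitly.
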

\begin{proof}
Let $\mathcal{C}$ be a cluster category of type $\widetilde{D}_n$,$\widetilde{E}_6$,$\widetilde{E}_7$ or $\widetilde{E}_8$. Path algebras of these types are of infinite representation type, and so 
the transjective component of the AR-quiver of $\mathcal C$ is of the form $\mathbb{Z}Q$, where $Q$ is any orientation of either $\widetilde{D}_n$,$\widetilde{E}_6$,$\widetilde{E}_7$ or $\widetilde{E}_8$. By ~\cite{BMRRT} any tilting object in $\mathcal C$ has at least one indecomposable summand in the transjective component. Furthermore if $M$ is an indecomposable object in the transjective component of $\mathcal C$, then by ~\cite[lemma 3.3]{BI} there are only finitely many indecomposable objects $X$ in $\mathcal C$ such that $\Ext^1_{\mathcal{C}}(X,M)=0$. By the Serre duality formula there is then only finitely many indecomposable objects $Z$ in $\mathcal C$ such that $\Hom_{\mathcal C}(M,Z)=0$. Thus for any tilting object $T$ having $M$ as a direct summand we can find an indecomposable object $Y$ satisfying the assumptions of Lemma \ref{lemma1}.
\end{proof}
From this proposition and \cite{ABGP}, we have the following:
\begin{corollary}
If $\Gamma$ is a cluster-tilted algebra of Euclidean type, then $\Gamma$ is special biserial algebra if and only if $H$ is of type $\widetilde{A}$.
\end{corollary}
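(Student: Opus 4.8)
The plan is to prove the two implications of the biconditional separately, after observing that the Euclidean (affine Dynkin) types split into type $\widetilde{A}$ on one hand and the types $\widetilde{D}_n$, $\widetilde{E}_6$, $\widetilde{E}_7$, $\widetilde{E}_8$ on the other.

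For the direction ``$H$ of type $\widetilde{A}$ implies $\Gamma$ special biserial'', I would simply invoke \cite{ABGP}: every cluster-tilted algebra of type $\widetilde{A}$ is gentle. A gentle algebra is by definition of the form $KQ/I$ with $I$ admissible and satisfying requirements that are strictly stronger than conditions a) and b) in the definition of special biserial recalled in Section~\ref{seksjonARogSB} (at each vertex at most two arrows start and at most two end, and for each arrow $a$ there is at most one arrow $b$ with $ab \notin I$ and at most one $c$ with $ca \notin I$). Hence every gentle algebra is special biserial, and in particular $\Gamma$ is.

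For the converse, suppose $\Gamma$ is a cluster-tilted algebra of Euclidean type which is special biserial, and write $\Gamma = \End_{\mathcal C}(T)^{\op}$ for a tilting object $T$ in the cluster category $\mathcal C$ of a hereditary algebra $H$ whose type is one of $\widetilde{A}$, $\widetilde{D}_n$, $\widetilde{E}_6$, $\widetilde{E}_7$, $\widetilde{E}_8$. If the type were $\widetilde{D}_n$, $\widetilde{E}_6$, $\widetilde{E}_7$ or $\widetilde{E}_8$, then Proposition~\ref{tildeprop} would give that $\Gamma$ is not special biserial, contradicting our assumption. Hence $H$ is of type $\widetilde{A}$. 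Combining the two implications yields the stated equivalence.

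I do not expect a genuine obstacle here: the entire content is already packaged into Proposition~\ref{tildeprop} (which disposes of the non-$\widetilde{A}$ Euclidean types via Lemma~\ref{lemma1}) and into the gentleness result of \cite{ABGP}; the only point worth spelling out explicitly is the elementary observation that gentle algebras satisfy conditions a) and b), so that they are special biserial.
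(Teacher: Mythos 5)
Your proposal is correct and follows essentially the same route as the paper: the paper also derives the corollary directly from Proposition~\ref{tildeprop} (ruling out types $\widetilde{D}_n$, $\widetilde{E}_6$, $\widetilde{E}_7$, $\widetilde{E}_8$) together with the gentleness result of \cite{ABGP} for type $\widetilde{A}$. Your added remark that gentle algebras satisfy the special biserial conditions is a harmless elaboration of a fact the paper uses implicitly.
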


\section{Summary}
It has been shown in \cite[corollary 2.4]{WaldWasch} that any special biserial algebra is either of finite representation type or tame. By \cite[corollary 3.3]{BZ}, if $T$ is a tilting object in the cluster category of a finite dimensional hereditary algebra $H$ and $\Gamma=\End_{\mathcal{C}_H}(T)^{\op}$ is the cluster-tilted algebra of $T$, then $\Gamma$ is of finite representation type if and only if $H$ is of finite representation type and $\Gamma$ is tame if and only if $H$ is tame. Combining this with Theorem ~\ref{typeDiff} and Propositions ~\ref{E6prop},~\ref{E7prop},~\ref{E8prop} and ~\ref{tildeprop} we have the following description of special biserial cluster-tilted algebras:

\begin{theorem}
Let $H$ be a finite dimensional hereditary algebra over an algebraically closed field $K$ and $T$ be a basic cluster-tilting object in the cluster category $\mathcal{C}_{H}$. Then $\Gamma=\End_{\mathcal C}(T)^{\op}$ is a special biserial algebra if and only if 
\begin{itemize}
\item[a)] $H$ is of type $A$, or
\item[b)] $H$ is of type $\widetilde{A}$, or
\item[c)] $H$ is of type $D$, and any indecomposable summand in $T$ is either an $\alpha$- or a $\gamma$-object. 
\end{itemize}
\end{theorem}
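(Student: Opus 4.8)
The plan is simply to assemble the results of the previous sections, organized by the representation type of $H$. Since derived equivalent hereditary algebras determine the same cluster category, and hence the same cluster-tilted algebras, I would first reduce to the case $H=KQ$ where $Q$ is an orientation of a Dynkin diagram ($A_n$, $D_n$, $E_6$, $E_7$, $E_8$), of a Euclidean diagram ($\widetilde{A}_n$, $\widetilde{D}_n$, $\widetilde{E}_6$, $\widetilde{E}_7$, $\widetilde{E}_8$), or of a wild quiver; the two implications of the stated equivalence can then be verified type by type.

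First I would eliminate the wild case. By \cite[corollary 2.4]{WaldWasch} every special biserial algebra is of finite representation type or tame, and by \cite[corollary 3.3]{BZ} the cluster-tilted algebra $\Gamma$ is of finite representation type if and only if $H$ is, and $\Gamma$ is tame if and only if $H$ is. Hence if $H$ is wild then $\Gamma$ is neither of finite representation type nor tame, so it is not special biserial. This leaves the nine Dynkin and Euclidean types.

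Next I would treat these types individually. If $H$ is of type $A$ or $\widetilde{A}$, then by \cite{ABGP} the algebra $\Gamma$ is gentle, hence special biserial; these are cases (a) and (b), with no restriction on $T$. If $H$ is of type $D_n$, Theorem \ref{typeDiff} states exactly that $\Gamma$ is special biserial precisely when every indecomposable summand of $T$ is an $\alpha$- or a $\gamma$-object, which is case (c). For the remaining types no cluster-tilted algebra is special biserial: this is Propositions \ref{E6prop}, \ref{E7prop} and \ref{E8prop} for $H$ of type $E_6$, $E_7$, $E_8$, and Proposition \ref{tildeprop} for $H$ of type $\widetilde{D}_n$, $\widetilde{E}_6$, $\widetilde{E}_7$, $\widetilde{E}_8$. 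Combining these: each of (a), (b), (c) forces $\Gamma$ to be special biserial, and conversely if $\Gamma$ is special biserial then $H$ is not wild, not of type $E$, and not of Euclidean type other than $\widetilde{A}$, so $H$ is of type $A$, $\widetilde{A}$, or $D$, and in the type $D$ case the $\alpha$/$\gamma$ condition on $T$ holds. This is precisely the claimed equivalence.

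The assembly itself is routine; the substantive work was already done in the earlier sections — the analysis of the mutation class and the distribution of summands in the AR-quiver for type $D$ leading to Theorem \ref{typeDiff}, and the case-by-case inspection of AR-triangles via Lemma \ref{lemma1} for types $E$, $\widetilde{D}$ and $\widetilde{E}$ — so I do not expect a genuine obstacle in the proof itself. The one point requiring care is that condition (c) is a condition on the pair $(H,T)$ rather than on $H$ alone; I would make sure the reduction to $H=KQ$ and the derived equivalences invoked preserve the notions of $\alpha$- and $\gamma$-object, so that (c) is well-posed.
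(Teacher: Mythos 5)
Your proposal is correct and follows essentially the same route as the paper: the wild case is excluded via \cite[corollary 2.4]{WaldWasch} together with \cite[corollary 3.3]{BZ}, and the remaining Dynkin and Euclidean types are settled by citing \cite{ABGP} for $A$ and $\widetilde{A}$, Theorem~\ref{typeDiff} for $D$, and Propositions~\ref{E6prop}, \ref{E7prop}, \ref{E8prop} and \ref{tildeprop} for the rest. Your closing remark about checking that condition (c) is well-posed under the reduction to $H=KQ$ is a reasonable extra precaution, but otherwise the assembly matches the paper's own argument.
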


\bibliographystyle{plain}

\end{document}